\def\tr{\mbox{Trace} }
\def\half{{1\over2}}%
\def\inv{^{-1}}%
\def\invt{^{-T}}%
\def\nref#1{(\ref{#1})}
\def\comb#1,#2,{ \left( {#1 \atop #2 } \right)  }%
\def\Span{\mbox{Span}}
\newtheorem{algor}{{\sc Algorithm}}[section]
\def\betab{\vspace{0.001in}\begin{tabbing} 
xxxx\=xxxx\=xxxx\=xxxx\=xxxxxx\=xxxxxxxxxxxxxxxxxxxxxxxxxxxxx\=xx\=xx\=xx\=xx\=xx\=xx\=xx\= \kill}
\def\entab{\end{tabbing}\vspace{0.15in}}
\newcommand{\eq}[1]{\begin{equation}\label{#1}}
\newcommand{\en}{\end{equation}}
\newcommand{\nvec}{n_{\mathrm{vec}}}
\newcommand{\wt}{\widetilde}
\def\trans{^{\mathsf{T}}}
\def\invt{^{-\mathsf{T}}}%
\def\up#1{^{(#1)}}%
\providecommand{\norm}[1]{\Vert #1 \Vert} 
\def\inv{^{-1}}%
\title{Fast computation of spectral densities for generalized eigenvalue problems}
\date{today}
\author{Yuanzhe Xi 
\thanks{\{\texttt{yxi,saad}\}@umn.edu; 
Work supported in part (applications and practical aspects) by 
by the Scientific Discovery through Advanced Computing (SciDAC)
program funded by U.S.  Department of Energy, Office of Science,
Advanced Scientific Computing Research and Basic Energy Sciences
DE-SC0008877 and in part (theory) by NSF under grant 
CCF-1505970}
 \and Ruipeng Li 
\thanks{Center  for Applied  Scientific Computing,
    Lawrence  Livermore National  Laboratory,  P. O.  Box 808,  L-561,
    Livermore,   CA  94551   {(\tt{li50@llnl.gov})}.  This   work  was
    performed under the  auspices of the U.S. Department  of Energy by
    Lawrence    Livermore   National    Laboratory   under    Contract
    DE-AC52-07NA27344 (LLNL-JRNL-xxxxxx).} 
\and Yousef Saad
\footnotemark[1]}
\begin{document}

\maketitle

\begin{abstract}
  The distribution  of the eigenvalues  of a Hermitian  matrix (or of a Hermitian matrix pencil) reveals
  important features of the  underlying problem, whether a Hamiltonian
  system   in   physics,   or   a   social   network   in   behavioral
  sciences.  However,  computing  all the  eigenvalues  explicitly  is
  prohibitively  expensive for  real-world  applications.  This  paper
  presents two types  of methods to efficiently  estimate the spectral
  density  of a  matrix pencil  $(A,  B)$ when  both $A$  and $B$  are
  Hermitian and, in addition, $B$  is positive definite. The first one
  is based  on the Kernel  Polynomial Method  (KPM) and the  second on
  Gaussian  quadrature   by  the  Lanczos  procedure.    By  employing
  Chebyshev polynomial  approximation techniques, we can  avoid direct
  factorizations in  both methods, making the  resulting algorithms
  suitable for  large matrices. Under  some assumptions,
  we prove bounds that suggest that the Lanczos method  converges twice as fast as the KPM
  method. Numerical examples further  indicate that the Lanczos method
  can provide  more accurate  spectral densities when  the eigenvalue
  distribution is highly  non-uniform. As an application,  we show how
  to use the computed spectral  density to partition the spectrum into
  intervals that contain roughly the  same number of eigenvalues. 
  This procedure, which makes it possible to compute the spectrum by parts,
  is a key ingredient in the  new breed of  eigensolvers that exploit ``spectrum slicing".
\end{abstract}

\begin{keywords} 
Spectral density, density of states, generalized eigenvalue problems, spectrum slicing, Chebyshev approximation, perturbation theory.
\end{keywords}

\begin{AMS} 
15A18, 65F10, 65F15, 65F50
\end{AMS}

\pagestyle{myheadings} \thispagestyle{plain}
\markboth{\uppercase{Yuanzhe Xi, Ruipeng Li, and Yousef Saad}}{\uppercase{Fast spectral density estimation}}

\section{Introduction} \label{sec:intro}
The problem of estimating the \emph{spectral density} 
of  an $n\times n$ Hermitian matrix $A$, has many applications in science
and engineering.  The spectral density is termed 
\emph{density of states} (DOS) in  solid state physics where it plays a key role.
Formally, the   DOS is defined as
\eq{eq:DOS0}
\phi(t) = 
\frac{1}{n}   
\sum_{j=1}^n \delta(t - \lambda_j) ,
\en 
where $\delta$ is the Dirac $\delta$-function or Dirac distribution,
and the $\lambda_j$'s are the eigenvalues of $A$, assumed here to be labeled
increasingly. In general, the formal definition of the spectral density as expressed by
\nref{eq:DOS0} is not  easy to use in practice.  Instead,  it is often
approximated, or more specifically smoothed,  and it is this resulting
approximation,  usually  a  smooth  function,  that  is  sought.  

Estimating spectral densities can be useful in a wide range of applications 
apart from the important ones in physics, chemistry and network analysis, see, e.g.,  
\cite{2015-siam-ns,kpmsurvey2006,DOSpaper16}.
One such application is the problem of  estimating 
the number $\eta_{[a, \ b]} $  of eigenvalues 
in an interval $[a,\ b]$. Indeed, this number can be obtained by
integrating the spectral density in the interval: 
\eq{eq:DOS1}
\eta_{[a,\ b]} = \int_a^b 
\sum_j  \delta(t - \lambda_j) \ dt \equiv 
\int_a^b n \phi(t) dt \ .
\en 
Thus, one can view $\phi(t)$ as a probability %
distribution function which gives the probability of finding 
eigenvalues of $A$ in a given infinitesimal interval near $t$ and a simple
look at the DOS plot provides a sort of sketch view of the spectrum of $A$.

Another, somewhat related, use of density  of states is in helping deploy
spectrum  slicing  strategies \cite{DDSpec,spectrumslicing,lsfeast}.  The  goal of  such  strategies  is  to
subdivide a given interval of  the spectrum into subintervals in order
to compute  the eigenvalues  in each subinterval  independently.  Note
that  this  is  often  done   to  balance  memory  usage  rather  than
computational load. Indeed, load balancing cannot be assured by just
having  slices with  roughly equal  numbers of  eigenvalues.  
With the availability of the spectral density function $\phi$, slicing the
 spectrum contained in an interval $[a, \ b]$ 
into $n_s$ subintervals can be  easily  accomplished. 
 Indeed, it suffices to find intervals $[t_i, \ t_{i+1}]$,
$i=0, \cdots, n_{s}-1$,  
 with $t_0 = a$ and $t_{n_s} = b$ such that 
\[ 
 \int_{t_i}^{t_{i+1}} \phi(t) dt = 
\frac{1}{n_{s}}\int_{a}^{b} \phi(t) dt , \quad i=0,1,\cdots, n_s-1 . 
\] 
See Fig. \ref{fig:dosslice} for an illustration and Section \ref{sec:slicing} for more details.

\begin{figure} 
\centerline{\includegraphics[width=0.62\textwidth]{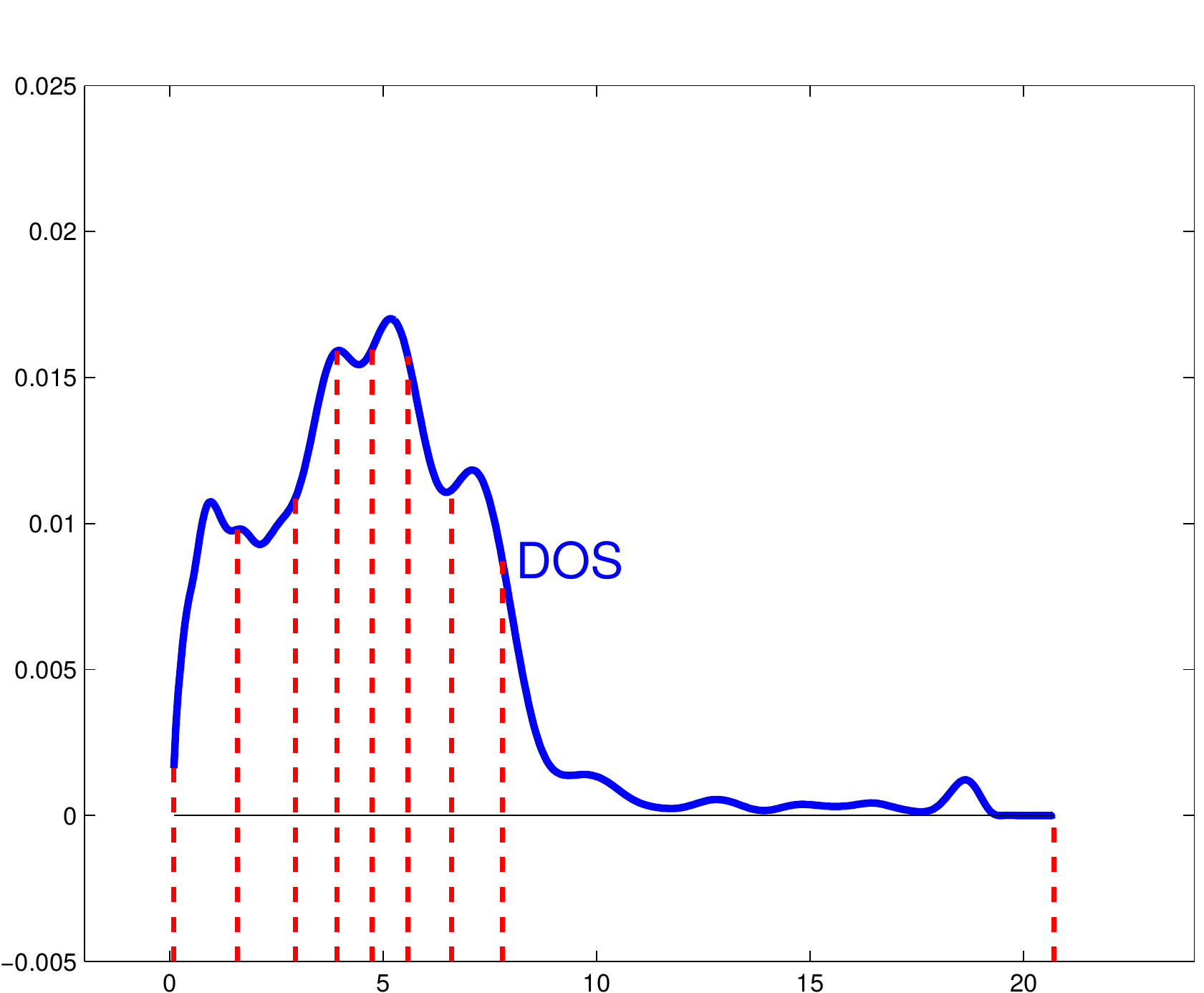}}
\caption{An illustration of slicing a spectrum into $8$ subintervals $[t_i,\ t_{i+1}]$ $(i=0,\ldots, 7)$.  The solid blue curve represents a smoothed density of states \text{(DOS)} and the dotted red lines separate the subintervals.}
\label{fig:dosslice} 
\end{figure}

A non-standard and important use of spectral densities is when estimating
numerical ranks of matrices
\cite{UbaruYSrank16,UbaruYSrankDL}. In many applications, a given 
$m \times n$ data matrix $A$ (say with $m\!>\!n$) 
is known to correspond to a phenomenon that should yield vectors lying in 
a low-dimensional space. With noise and approximations the resulting data 
is no longer of low-rank but it may be nearly low-rank in that its 
numerical rank is small. It may be important in these applications to 
obtain this numerical rank. In \cite{UbaruYSrank16,UbaruYSrankDL} the authors
developed a few heuristics that exploit the spectral density for this task.
The main idea is that for a nearly low-rank matrix,
the spectral density should be quite high near the origin of the matrix 
$A^T A$ and it should drop quickly before increasing again. The numerical rank
corresponds to the point when $\phi$ starts increasing again, i.e., when
the derivative of the DOS changes signs. This simple strategy provides
an efficient way to estimate the rank.

A straightforward way to obtain the spectral density of a given matrix
$A$ is to  compute all its eigenvalues but this  approach is expensive
for  large  matrices.  Effective   alternatives  based  on  stochastic
arguments   have  been   developed,  see,   \cite{DOSpaper16}  for   a
survey. Essentially  all the  methods described  in the  literature to
compute the DOS rely on performing  a number of products of the matrix
$A$  with random  vectors.  For  sparse matrices  or dense  structured
matrices   with  almost   linear  complexity   matrix-vector  products
\cite{smash,GREENGARD1997280}, these products are inexpensive and so a
fairly good  approximation of the  DOS can be  obtained at a  very low
cost. On the  other hand, not much  work has been done  to address the
same  problem  for  generalized  eigenvalue problems  \eq{eq:Pb}  A  x
=\lambda B x.   \en This paper focuses on this  specific issue as well
as on the related problem  on implementing spectrum slicing techniques
\cite   {spectrumslicing,Schofield2012497}.     From   a   theoretical
viewpoint the  problem may  appear to  be a  trivial extension  of the
standard   case.  However,   from   a   practical  viewpoint   several
difficulties  emerge, e.g.,  it is  now  necessary to  solve a  linear
system  with $B$  (or $A$)  each  time we  operate on  vectors in  the
stochastic  sampling   procedure  or  in  a   Lanczos  procedure.  For
large-scale problems  discretized from 3D models,  factorizing $B$ (or
$A$) tends to be prohibitively expensive and so this naturally leads to
the question: \emph{Is it  possible to completely avoid factorizations
  when computing the density of  states for (\ref{eq:Pb})?} As will be
seen the answer is yes, i.e., it is possible to get the DOS accurately
without any factorizations and at a  cost that is comparable with that
of standard problems in many applications. For example, the matrix $B$
is often the mass matrix in discretizations such as the Finite Element
Method (FEM).  An  important observation that is  often made regarding
these matrices is that they  are strongly diagonally dominant.





In the remainder of the paper 
we will assume that $A$ and $B$ are Hermitian while, in addition,
 $B$ is positive definite. We will call 
$\lambda_j$, $j=1,2,\cdots,n$  the eigenvalues of the pencil $
(A,B)$, and assume that they are  labeled increasingly.  
We also denote by $u_{j}$ the eigenvector corresponding
to $\lambda_{j}$, so if $U=[u_1, u_2,\cdots, u_n]$ and
$\Lambda = \mbox{diag} (\lambda_1,\lambda_2,\cdots,\lambda_n)$, then 
the pencil $(A, B)$ admits the eigen-decomposition
\begin{align} 
  U^T A U  &= \Lambda 	\label{eqn:Aeigen1}  \\
  U^T B U  &=  I  \ .\label{eqn:Aeigen2}
\end{align}

The rest of  the paper is organized as  follows. Section \ref{sec:sym}
discusses  a  few  techniques  to  avoid  direct  factorizations  when
extending  standard   approaches  for   computing  the   DOS to the generalized
eigenvalue problem.  Section
\ref{sec:kpm}  presents   the  extension   of  the   classical  Kernel
Polynomial Method (KPM) and  Section \ref{sec:lan} studies the Lanczos
method  from  the  angle  of quadrature.  We  provide  some  numerical
examples  in  Section  \ref{sec:numerical} and  draw  some  concluding
remarks in Section \ref{sec:conclude}.

%

\section{Symmetrizing the generalized eigenvalue problem}\label{sec:sym}
A common way to express the generalized eigenvalue problem
\nref{eq:Pb} is to multiply through by $B\inv$:
\eq{eq:stdB}
B\inv A x = \lambda x. 
\en 
This is now in the standard form but the matrix involved is  non-Hermitian.
However, as is well-known, the matrix $B\inv A$ is self-adjoint with
respect to the $B$-inner product and this observation allows  one to use
standard methods, such as the Lanczos algorithm,
that are designed for Hermitian matrices. 

Another way to extend standard approaches for computing the spectral density
is to transform the problem \nref{eq:Pb} into a standard one via the 
Cholesky factorization.
First, assume that  the Cholesky factorization of $B$ is available and
let it be written as $B=LL\trans$. 
Then the original  problem \nref{eq:Pb} can also be rewritten as
\eq{eq:stdL}
L\inv A L\invt y = \lambda y, \quad \mbox{with} \quad y=L\trans x,
\en 
which takes  the standard form with a Hermitian coefficient matrix.
This allows us to 
express the density of states from that of  a standard problem.
This straightforward solution faces a number of issues.
Foremost among these is the fact that the Cholesky factorization may not
be available or that it may be too expensive to compute. 
In the case  of FEM methods, the factorization of $B$ may be
too costly for $3D$ problems.

Note that the matrix square root factorization can also be used in the same way. Here 
 the original  problem \nref{eq:Pb} is transformed into the equivalent problem: 
\eq{eq:stdS}
B^{-1/2} A B^{-1/2}  y = \lambda y, \quad \mbox{with} \quad y=B^{1/2}  x,
\en 
which also assumes  the standard form with a Hermitian coefficient matrix.
 The square root factorization is usually expensive to compute and may appear
to be impractical at first. However, in the common situation mentioned above
where  $B$ is
strongly diagonally dominant, the action of $B^{-1/2}$ as well $B^{-1}$ on a vector can
be easily approximated by the matrix-vector product associated with a low degree polynomial in $B$. 
This is discussed next.

\subsection{Approximating actions of $B\inv$ and $B^{-1/2}$ on vectors}\label{sec:funapp}
As was seen above computing the DOS for a pair of matrices requires matrix-vector products with either 
 $B\inv A$, or $L\inv A L\invt$ 
or with $B^{-1/2} A B^{-1/2}$. 
Methods based on the first two cases can be implemented  with direct methods
but this   requires a factorization of $B$.
Computing  the Cholesky,  or any  other  factorization of  $B$ is  not
always  economically  feasible for  large  problems.  It is  therefore
important to  explore alternatives  based on the  third case  in which
polynomial approximations of $B^{-1/2}$ are exploited.

All we need to  apply the methods described in this paper  is a way to
compute $B^{-1/2} v$ or $B\inv v$ for an arbitrary  vector $v$. These
calculations  amount  to evaluating  $f(B)v$  where  $f (\lambda  )  =
\lambda ^{-1/2} $  in one case and  $f(\lambda ) = 1/\lambda  $ in the
other.  Essentially the same method is  used in both cases, in that $f
(B)  v  $  is replaced  by  $f_k(B)v$  where  $f_k$  is an  order  $k$
polynomial   approximation  to   the  function   $f$  obtained   by  a
least-squares approach.  Computing $B^{-1/2} v$, is a problem that was
examined   at  length   in   the  literature   --   see  for   example
\cite{ChowEtAl,ChAnSa09,Higham-book} and references  therein.  Here we
use a  simple scheme that relies  on a Chebyshev approximation  of the
square root function in the interval $[a, \ b]$ where $a\!>\!0$.

Recall that any function that is analytic in $[a , \ b]$ can be expanded
in Chebyshev polynomials. To do so, the first step is to 
map $[a, \ b ] $ into the interval $[-1, \ 1]$, i.e., we impose the
change of variables from $\lambda \!\in\! [a, \ b] $ to $t \! \in \! [-1, \ 1]$:
\[
t = \frac{\lambda-c}{h} \quad \mbox{with} \quad 
c = \frac{a+b}{2}, \quad h = \frac{b-a}{2} \ . 
\]
In this way the function is transformed into a function $f$ with variables
in the interval $[-1,  \ 1]$. 
It is this $f$  that is approximated using the  truncated Chebyshev expansion:
\eq{eq:expg}
f_k(t) = \sum_{i=0}^k
\gamma_i T_i(t)
\quad \text{with} \quad 
\gamma_i =  \frac{2-\delta_{i0}}{\pi}  
\int_{-1}^1  \frac{f(s) T_i(s)}{\sqrt{1-s^2}} ds, 
\en
where $T_i(s)$ is the Chebyshev polynomial of the first kind of degree $i$.
Here   $\delta_{ij}$ is the Kronecker $\delta$ symbol so that 
 $2 - \delta_{k0} $ is equal to 1 when $k=0$ and to 2 otherwise.

Recall that $T_i$'s are orthogonal with respect to the
inner product 
\eq{eq:ChbProd} 
\left\langle p, q\right\rangle = \int_{-1}^1  \frac{p(s) q (s)}{\sqrt{1-s^2}} ds .
\en
We   denote by $\| . \|_\infty$ the supremum norm and by
 $\| . \|_C$ the $L_2$ norm associated with the above dot product:
\eq{eq:ChbNrm} 
\| p \|_C = \left[\int_{-1}^1  \frac{p(s)^2}{\sqrt{1-s^2}} ds \right]^{1/2}. 
\en
Note in passing that $T_i$'s do not have a unit norm with respect to 
\nref{eq:ChbNrm} but that the following  normalized sequence 
 is orthonormal:
\eq{eq:ChebNorm} 
\hat T_i(s) = \sqrt{\frac{2-\delta_{i0}}{\pi}}  T_i(s) ,
\en 
so that \nref{eq:expg} can be rewritten as $f_k(t) = \sum_{i=0}^k \hat \gamma_i \hat T_i(t)$ with
$\hat \gamma_i = \langle f(t), \hat T_i(t)\rangle$.

The integrals in \nref{eq:expg} 
 are computed using Gauss-Chebychev quadrature. 
The accuracy of the approximation and therefore the degree needed to
obtain a suitable approximation to use in replacement of $f(B)v$ depends
essentially on the degree of smoothness of $f$. One issue here is to determine the number of integration points
to use. Recall that when we use Gauss-Chebyshev quadrature with 
$\nu$ points,  the calculated integral is exact for all polynomials of degree $\le 2 \nu-1$.


The reasoning for selecting $\nu$ is as follows. Let $p_{K} $ be the truncated Chebyshev expansion of $f$, with $K \gg k$. 
Then for $i\le k$ the coefficients $\hat \gamma_i$ for $i \le k$ are the same for
$p_k$ and for $p_{K}$ and they are:
\[
\hat \gamma_i = \left\langle f, \hat T_i \right\rangle 
= \left\langle f - p_{K}, \hat T_i \right\rangle  + \left\langle p_{K}, \hat T_i \right\rangle 
=  \left\langle p_{K}, \hat T_i \right\rangle .  
\]
The last equality is due to the orthogonality of the error to the $T_i$'s,
when $i\le K$.
Now observe that since $p_{K} (t) \hat T_i (t) $ is a polynomial of degree $\le K+k$ the integral 
$\langle p_{K}, \hat T_i \rangle$ will be computed exactly by the Gauss-Chebyshev rule 
as long as $K+k \le 2 \nu -1$, i.e., for $\nu \ge (K+k+1)/2$.
For example, when $K = 2k $ then for $\nu \ge (3k+1)/2$, $\hat \gamma_i$ \emph{will be the exact 
coefficient not for $f(t)$, but for $p_{2k}$ the degree $2k$ Chebyshev expansion which is usually much 
closer to $f$ than $p_k$.} 
While $\nu = \lceil (3k+1)/2 \rceil$ is usually sufficient, we prefer a lower margin for error and select
$\nu = 4k$ bearing in mind that the cost of quadrature is negligible.

\subsection{Analysis of the approximation accuracy}

Consider the two functions 
$ f_1(\lambda) = \lambda^{-1/2}$ and   $ f_2(\lambda) = \lambda^{-1}$ 
over  
$\lambda \! \in \! [a, \ b]$ where
$a\!>\!0$. It is assumed that the interval $[a, \ b]$ contains the spectrum of 
$B$ - with ideally $a = \lambda_{min} (B)$, $b = \lambda_{max} (B)$.
We set $c = (a+b)/2$, $h = (b-a)/2$. As mentioned above
we need to transform the interval $[a,\ b]$ into $[-1, \ 1]$, so the
transformed functions  being approximated  are in fact 
\begin{align} 
g(t) &= (c+h t)^{-1/2} , \label{eq:goft} \\
q(t) &= (c+h t)^{-1} , \label{eq:qoft} 
\end{align} 
with the variable $t$ now in $[-1, \ 1]$.
These two functions are clearly analytic in the interval $[-1,  \ 1]$
and they have  a singularity when $c+ht = 0$, i.e., at
$t_s = -c/h $ which is less than $-1$.
 Existing results in the literature will help analyze the convergence of the
truncated Chebyshev expansion in situations such as these, see, e.g.,
\cite{Trefethen-book}.

We can apply the result of Theorem~8.2 in the book \cite{Trefethen-book} to
show a strong convergence result. 
The Joukowsky transform
$(z+1/z)/2$ maps the circle $C(0,\rho)$ into an ellipse $E_\rho$,
with major semi-axis 
$(\rho +\rho \inv)/2$ and focii $-1, 1$. There are two values of $\rho$
that give the same ellipse and they are inverses of each other.
 We assume that $\rho\!>\!1$. The ellipse $E_\rho$
 is called the Bernstein ellipse in the framework of the theorem 
in \cite{Trefethen-book}  which is restated below for the present context. See Fig. \ref{fig:bernstein} for an illustration of Bernstein ellipses corresponding to different $\rho$'s.
 \begin{theorem}\label{thm:trf} 
\cite[Theorem 8.2]{Trefethen-book} 
 Let a function $f$ analytic in $[-1, \ 1]$ be analytically continuable
to the open Bernstein ellipse $E_\rho$ where it satisfies 
$|f(t)| \le M(\rho) $ for some $M(\rho) $. Then for each $k \ge  0$, its
truncated Chebyshev expansion $f_k$ (eq. \nref{eq:expg})  satisfies:
\eq{eq:trf} 
\| f - f_k \|_\infty \le \frac{2 M(\rho)  \rho^{-k}}{\rho -1}.
\en 
\end{theorem}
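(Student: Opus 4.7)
The plan is to follow the standard proof of this classical result, which proceeds in two stages: first derive a sharp decay estimate on the individual Chebyshev coefficients $\hat\gamma_i$ of $f$, then sum the resulting geometric tail to bound $\|f-f_k\|_\infty$ on $[-1,1]$.

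First, I would obtain a contour-integral representation of the Chebyshev coefficients. The Joukowsky transform $t = (z+z^{-1})/2$ maps the circle $|z|=r$ (with $r>1$) to the Bernstein ellipse $E_r$, and maps $|z|=1$ to the interval $[-1,1]$ traversed twice. Under this transform, $T_i(t) = (z^i + z^{-i})/2$ and $dt = \tfrac{1}{2}(1-z^{-2})\,dz$, while the Chebyshev weight $1/\sqrt{1-t^2}$ turns the defining integral for $\gamma_i$ in \nref{eq:expg} into a genuine contour integral of $f$ on $|z|=1$. Since $f$ is analytic inside $E_\rho$, this integrand is analytic in the annulus $1 \le |z| < \rho$, so we may deform the contour outward from $|z|=1$ to $|z|=r$ for any $r < \rho$.

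Next, I would estimate the coefficient. On $|z|=r$ we have $|f((z+z^{-1})/2)| \le M(\rho)$ (by letting $r \to \rho$), and the factor $T_i$ contributes $r^{-i}$ in modulus (the dominant piece after the deformation). A direct length-times-sup bound gives
\[
|\gamma_i| \le 2 M(\rho)\, \rho^{-i}, \qquad i \ge 1,
\]
after letting $r \uparrow \rho$ (with an analogous bound at $i=0$ that can be absorbed into the final constant). The key technical point here is the contour deformation together with the careful accounting of the Jacobian from the Joukowsky substitution; this is the step I expect to be the main obstacle, since one must verify that the singularities of the integrand (coming from $z=\pm 1$ in the weight and from $z^{-i}$ at $z=0$) do not obstruct the deformation within the annulus $1 \le |z| < \rho$.

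Finally, I would combine these coefficient bounds with the fact that $\|T_i\|_\infty = 1$ on $[-1,1]$. Since $f - f_k = \sum_{i=k+1}^\infty \gamma_i T_i$ (convergence being uniform because of the decay just established), the triangle inequality gives
\[
\|f-f_k\|_\infty \;\le\; \sum_{i=k+1}^\infty |\gamma_i| \;\le\; 2 M(\rho) \sum_{i=k+1}^\infty \rho^{-i} \;=\; \frac{2 M(\rho)\,\rho^{-k}}{\rho - 1},
\]
which is exactly \nref{eq:trf}. The geometric sum is what produces the $1/(\rho-1)$ factor and makes the bound sharp in its dependence on $\rho$.
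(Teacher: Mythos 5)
Your proposal is correct and follows the same route as the source the paper relies on: the paper does not prove this theorem itself but cites it from Trefethen's book, where the argument is precisely your two-stage one --- the Joukowsky/contour-integral representation giving the coefficient decay $|\gamma_i| \le 2 M(\rho)\rho^{-i}$ (Trefethen's Theorem~8.1, which the paper itself quotes later as \nref{eq:trf4}), followed by summing the geometric tail with $\|T_i\|_\infty \le 1$. Nothing further is needed.
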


\begin{figure}[htb]
\centerline{
\includegraphics[width=0.7\textwidth]{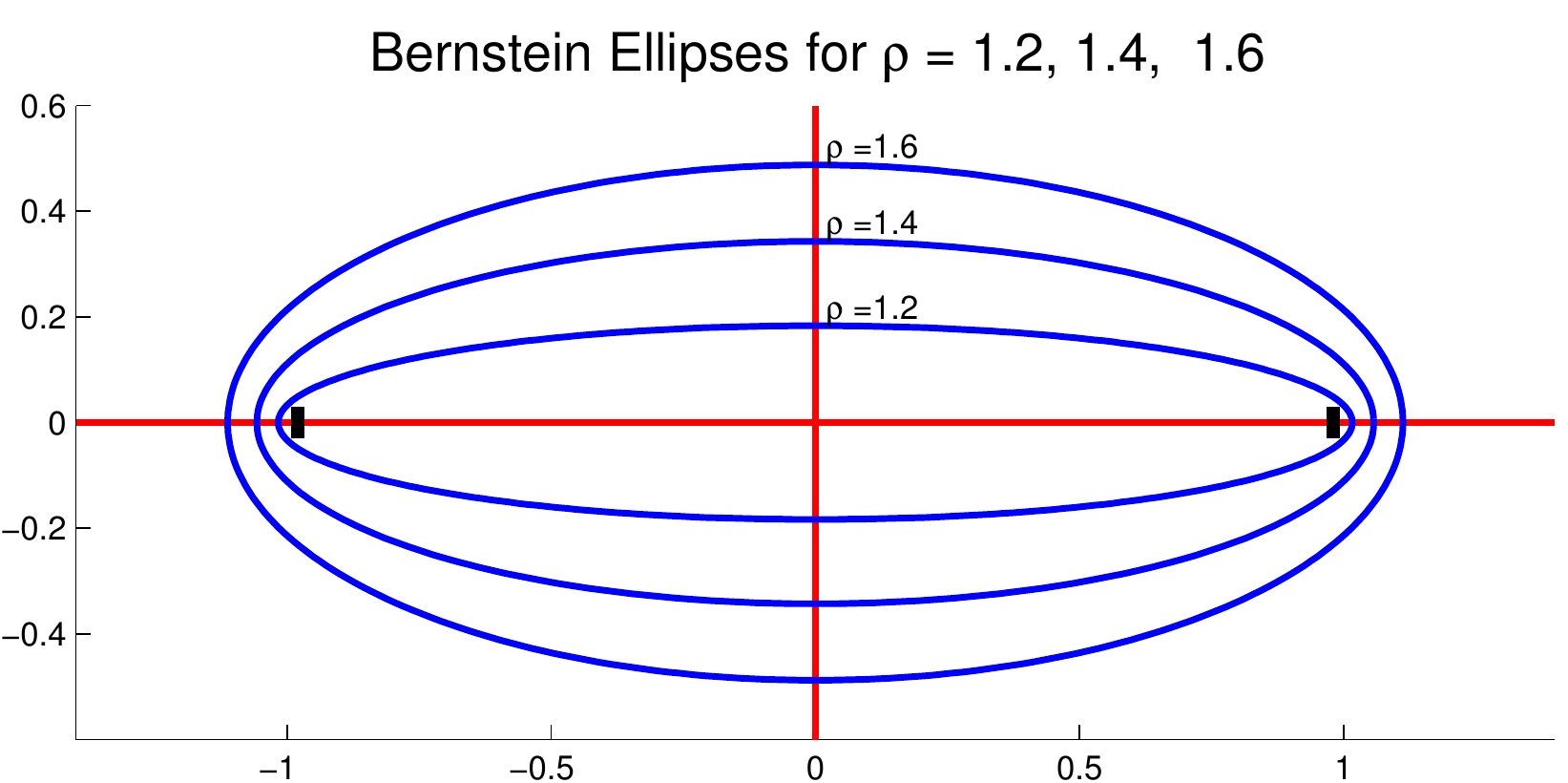}}
\caption{Bernstein ellipses for $\rho=1.2, 1.4, 1.6$.}
\label{fig:bernstein}
\end{figure}

The Bernstein ellipse should not contain the
point of singularity. Therefore, for the two functions under
consideration, we should take \emph{any} $\rho>1$ such that 
$(\rho + \rho\inv)/2 \!<\! c/h$, i.e., $\rho$ must satisfy: 
\eq{eq:rhoBnds} 
1 < \rho < \frac{c}{h} +  \sqrt{\left(\frac{c}{h}\right)^2 -1}.
\en
The next ingredient from the theorem is an upper bound $M(\rho)$ 
for $|f(t)|$ in $E_\rho$. In fact the maximum value of this
modulus is computable for both functions under consideration
and it is given in the next lemma.
\begin{lemma} \label{lem:MBnds}  
Let $\rho$ be given such that \nref{eq:rhoBnds} is satisfied. Then
the maximum modulii of the functions 
\nref{eq:goft} and \nref{eq:qoft} 
for $t \! \in \! E_\rho$ are given, respectively, by 
\begin{align} 
M_g (\rho) &= 
\frac{1}{\sqrt{c - h \frac{\rho + \rho\inv}{2}}} \label{eq:maxg}, \\
M_q (\rho) &= 
\frac{1}{c - h \frac{\rho + \rho\inv}{2}}  . \label{eq:maxq} 
\end{align} 
\end{lemma}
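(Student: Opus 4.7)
The plan is to reduce the problem to minimizing $|c+ht|$ on the Bernstein ellipse, since both $|g(t)| = |c+ht|^{-1/2}$ and $|q(t)| = |c+ht|^{-1}$ are decreasing functions of $|c+ht|$. Concretely, I would parameterize $E_\rho$ by the Joukowsky map applied to the circle $|z| = \rho$, writing $z = \rho e^{i\theta}$ so that
\[
t = \tfrac{1}{2}(z + z^{-1}) = \tfrac{1}{2}(\rho+\rho\inv)\cos\theta + \tfrac{i}{2}(\rho-\rho\inv)\sin\theta,
\]
and then compute
\[
|c+ht|^2 = \bigl(c + \tfrac{h}{2}(\rho+\rho\inv)\cos\theta\bigr)^2 + \bigl(\tfrac{h}{2}(\rho-\rho\inv)\sin\theta\bigr)^2.
\]

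Setting $\alpha = \tfrac{h}{2}(\rho+\rho\inv)$ and $\beta = \tfrac{h}{2}(\rho-\rho\inv)$, and using the identity $\alpha^2 - \beta^2 = h^2$, the expression simplifies to a quadratic in $u = \cos\theta$:
\[
\varphi(u) \;=\; c^2 + \beta^2 + 2c\alpha\, u + h^2 u^2, \qquad u \in [-1,1].
\]
The unique interior critical point is $u^* = -c\alpha/h^2$. The hypothesis \nref{eq:rhoBnds} forces $\alpha/h = (\rho+\rho\inv)/2 < c/h$, which combined with $\rho > 1$ (so $\alpha > h$) gives $c\alpha > h^2$, hence $u^* < -1$. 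Therefore $\varphi$ is decreasing on $[-1,1]$ and attains its minimum at $u = -1$, i.e.\ at $\theta = \pi$, which is the leftmost point of the ellipse (the one closest to the singularity $t_s = -c/h$, as one would expect geometrically).

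Evaluating at $u = -1$ and using $\beta^2 + h^2 = \alpha^2$ again yields
\[
\min_{t \in E_\rho} |c+ht|^2 \;=\; c^2 - 2c\alpha + \alpha^2 \;=\; (c - \alpha)^2,
\]
and the same condition \nref{eq:rhoBnds} guarantees $c - \alpha > 0$, so
\[
\min_{t \in E_\rho} |c+ht| \;=\; c - \tfrac{h}{2}(\rho + \rho\inv).
\]
Taking reciprocals (and a square root for $g$) gives the stated formulas \nref{eq:maxg} and \nref{eq:maxq}. The only genuinely delicate step is verifying that the unconstrained minimizer $u^*$ lies outside $[-1,1]$; this is where the admissibility condition on $\rho$ is used in an essential way, and it is the part where a careless estimate could easily go wrong.
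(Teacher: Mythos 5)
Your proof is correct and follows essentially the same route as the paper: parameterize $E_\rho$ by the Joukowsky map, reduce to minimizing $|c+ht|$ over the ellipse, and show the minimum occurs at $\theta=\pi$ with value $c-h(\rho+\rho^{-1})/2$. The only difference is cosmetic — the paper completes the square so that both resulting nonnegative terms are individually minimized at $\theta=\pi$, while you locate the vertex of the quadratic in $\cos\theta$ and check it lies outside $[-1,1]$; both uses of the admissibility condition \nref{eq:rhoBnds} are equivalent.
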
 
\begin{proof}
Denote $d=c+ht$  the term inside the parentheses of \nref{eq:goft} and
\nref{eq:qoft} and 
write $t \! \in \! E_\rho$ as: 
$t = \half [\rho e^{i \theta} + \rho\inv e^{-i \theta}]$. Then
 $ d = c + h (\rho e^{i \theta} + \rho\inv e^{-i \theta})/2 $ and 
\begin{align*} 
|d|^2 &= (c + h t ) (c+ h \bar t) = c^2  +  h c (t+\bar t) + h^2 t \bar t \\
 &= c^2  +  h c (\rho + \rho\inv) \cos \theta 
+ \frac{h^2}{4} [\rho^2 + \rho^{-2} + 2 \cos (2 \theta)] . 
\end{align*} 
Observe that $\rho^2 + \rho^{-2} = (\rho + \rho\inv)^2 -2$ and $\cos (2 \theta) = 2 \cos^2 \theta -1 $. Therefore,
\begin{align*} 
|d|^2 
 &= c^2  +  h c (\rho + \rho\inv) \cos \theta 
+ \frac{h^2}{4} [(\rho + \rho^{-1})^2 - 2 (1-\cos (2 \theta))] \\ 
 &= c^2  +  h c (\rho + \rho\inv) \cos \theta 
+ \frac{h^2}{4} [(\rho + \rho^{-1})^2 - 4 (1-\cos^2 \theta)] \\ 
 &= \left[c  +  h  \frac{\rho + \rho\inv}{2} \cos \theta \right]^2 
+ \frac{h^2}{4} 
\left[ (\rho + \rho^{-1})^2 - 4\right] (1-\cos^2 \theta)  \\ 
 &= \left[c  +  h  \frac{\rho + \rho\inv}{2} \cos \theta \right]^2 
+ h^2 
\left[\left( \frac{\rho + \rho^{-1}}{2}\right)^2 - 1\right] \sin^2 \theta .
\end{align*}
Since $(\rho + \rho\inv)/2\!>\!1 $, the second term in brackets is positive
and it is then clear that the minimum value of $|d|^2$ is reached when
$\theta = \pi$ and the corresponding $|d|$ is 
$c - h (\rho + \rho\inv)/2$. Inverting this gives \nref{eq:maxq}.
Taking the inverse square root yields \nref{eq:maxg} and this completes the proof.
\end{proof} 

Note that, as expected, both maxima go to infinity as $\rho$ approaches
its  right (upper) bound given by \nref{eq:rhoBnds}.
We can now state the following theorem which simply applies Theorem 
\ref{thm:trf} to the functions \nref{eq:goft} and \nref{eq:qoft},
using the bounds for $M(\rho)$ obtained in Lemma \ref{lem:MBnds}. 
 \begin{theorem}\label{thm:Main}
 Let  $g$ and $q$  be the functions 
given by \nref{eq:goft} and \nref{eq:qoft} and let
$\rho$ be any real number that
satisfies the inequalities \nref{eq:rhoBnds}. 
Then the truncated Chebyshev expansions $g_{k_1}$  and $q_{k_2}$ of $g$ and $q$,
respectively, satisfy:
\begin{align} 
\| g - g_{k_1} \|_\infty  & \le \frac{2 \rho^{-k_1}}{(\rho -1) 
\sqrt{c - h \frac{\rho + \rho\inv}{2}}} \label{eq:trfg}, \\
\| q - q_{k_2} \|_\infty  & \le \frac{2 \rho^{-k_2}}
{(\rho -1) \left(c - h \frac{\rho + \rho\inv}{2}\right)}. \label{eq:trfq} 
\end{align} 
\end{theorem}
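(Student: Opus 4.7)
The plan is to combine Theorem~\ref{thm:trf} with Lemma~\ref{lem:MBnds} in the most direct way possible: the theorem is essentially a packaging of the abstract Chebyshev convergence estimate specialized to the two explicit rational/algebraic functions $g$ and $q$.

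First, I would verify the hypothesis of Theorem~\ref{thm:trf}, namely that $g$ and $q$ admit analytic continuations to the open Bernstein ellipse $E_\rho$. Both functions have a single singularity, located at $t_s = -c/h$. The leftmost point of $E_\rho$ on the real axis is $-(\rho+\rho^{-1})/2$, so avoiding the singularity amounts to requiring $(\rho+\rho^{-1})/2 < c/h$. Solving this quadratic inequality in $\rho$ (using $\rho > 1$) gives exactly the upper bound in \nref{eq:rhoBnds}, so the hypothesis on $\rho$ guarantees that $t_s \notin \overline{E_\rho}$ and both $g, q$ are analytic on an open neighborhood of $\overline{E_\rho}$.

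Second, I would invoke Lemma~\ref{lem:MBnds} to identify the constants $M(\rho)$ needed in Theorem~\ref{thm:trf}: set $M(\rho) = M_g(\rho)$ for $g$ and $M(\rho) = M_q(\rho)$ for $q$, with explicit formulas \nref{eq:maxg} and \nref{eq:maxq}. Since the lemma shows that the minimum of $|c+ht|$ on $E_\rho$ is attained at $\theta=\pi$ with value $c - h(\rho+\rho^{-1})/2 > 0$ (positivity follows again from \nref{eq:rhoBnds}), the reciprocal and reciprocal square root are uniformly bounded on $E_\rho$ by $M_q(\rho)$ and $M_g(\rho)$, respectively.

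Third, I would simply substitute these choices of $M(\rho)$ into the conclusion \nref{eq:trf} of Theorem~\ref{thm:trf}, applied at approximation degree $k_1$ for $g$ and $k_2$ for $q$. The result is \nref{eq:trfg} and \nref{eq:trfq} verbatim, completing the proof. There is no real obstacle here: the only mildly delicate point is the compatibility between the open-ellipse statement of Theorem~\ref{thm:trf} and the closed-ellipse computation of Lemma~\ref{lem:MBnds}, but this is harmless since $g, q$ are analytic on a neighborhood of $\overline{E_\rho}$, so the maximum modulus principle lets us extend the bound from $E_\rho$ to its closure and vice versa.
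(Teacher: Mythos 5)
Your proposal is correct and follows exactly the paper's route: the paper's proof consists precisely of applying Theorem~\ref{thm:trf} to $g$ and $q$ with the constants $M_g(\rho)$ and $M_q(\rho)$ from Lemma~\ref{lem:MBnds}, which is what you do. Your additional verification that the singularity $t_s=-c/h$ lies outside $E_\rho$ under \nref{eq:rhoBnds} is a detail the paper handles in the surrounding discussion rather than in the proof itself.
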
  

Theorem~8.1 in \cite{Trefethen-book}, upon which Theorem~\ref{thm:trf} is
based, states that the coefficients $\gamma_i$ in \nref{eq:expg} 
decay geometrically, i.e.,
\eq{eq:trf4} 
|\gamma_k| \le 2 M(\rho) \rho^{-k}.
\en
Based on the above inequality, it is now possible to establish the following result for the approximation error of $g_{k_1}$ and $q_{k_2}$ measured in the Chebyshev norm.
\begin{proposition}\label{prop:infNrm}
Under the same assumptions as for Theorem~\ref{thm:Main}, 
the  
truncated Chebyshev expansions $g_{k_1}$  and $q_{k_2}$ of $g$ and $q$,
satisfy, respectively:
\begin{align} 
\| g - g_{k_1} \|_C & \le \sqrt{\frac{2 \pi}{\rho^2-1}}
\frac{ \rho^{-{k_1}}}{
\sqrt{c - h \frac{\rho + \rho\inv}{2}}} \label{eq:trfgI}, \\
\| q - q_{k_2} \|_C  & \le \sqrt{\frac{ 2\pi}{\rho^2-1}}
\frac{ \rho^{-k_2}}
{\left(c - h \frac{\rho + \rho\inv}{2}\right)}. \label{eq:trfqI} 
\end{align} 
\end{proposition}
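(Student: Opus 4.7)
The plan is to combine the geometric decay of the Chebyshev coefficients in (\ref{eq:trf4}) with Parseval's identity applied in the orthonormal basis $\{\hat T_i\}$ introduced in (\ref{eq:ChebNorm}).

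First, I would write the tail of the truncated expansion explicitly: since $f_k(t)=\sum_{i=0}^k\gamma_i T_i(t)$, one has
\[
f(t)-f_k(t)=\sum_{i=k+1}^{\infty}\gamma_i T_i(t)=\sum_{i=k+1}^{\infty}\hat\gamma_i\hat T_i(t),
\]
where, by the definition of $\hat T_i$, we have $\hat\gamma_i=\sqrt{\pi/(2-\delta_{i0})}\,\gamma_i$. Because every index in the tail satisfies $i\ge k+1\ge 1$, the factor $2-\delta_{i0}$ equals $2$ throughout, so $\hat\gamma_i^2=(\pi/2)\gamma_i^2$.

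Next I would invoke orthonormality of the $\hat T_i$'s with respect to the inner product (\ref{eq:ChbProd}) to get Parseval's identity,
\[
\|f-f_k\|_C^2=\sum_{i=k+1}^{\infty}\hat\gamma_i^2=\frac{\pi}{2}\sum_{i=k+1}^{\infty}\gamma_i^2.
\]
Now I would apply the coefficient bound (\ref{eq:trf4}), $|\gamma_i|\le 2M(\rho)\rho^{-i}$, and sum the resulting geometric series:
\[
\sum_{i=k+1}^{\infty}\gamma_i^2\le 4M(\rho)^2\sum_{i=k+1}^{\infty}\rho^{-2i}=\frac{4M(\rho)^2\rho^{-2k}}{\rho^2-1}.
\]
Combining the two displays and taking square roots yields the general estimate
\[
\|f-f_k\|_C\le M(\rho)\,\rho^{-k}\sqrt{\frac{2\pi}{\rho^2-1}}.
\]

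Finally, I would substitute $f=g$ with $M(\rho)=M_g(\rho)$ from (\ref{eq:maxg}) to obtain (\ref{eq:trfgI}), and $f=q$ with $M(\rho)=M_q(\rho)$ from (\ref{eq:maxq}) to obtain (\ref{eq:trfqI}). The only delicate point — and the one place where a misstep is possible — is correctly tracking the normalization constants between $T_i$, $\hat T_i$, $\gamma_i$, and $\hat\gamma_i$; the argument simplifies precisely because the truncation index ensures the $i=0$ term (with the anomalous $2-\delta_{i0}=1$) never appears in the tail.
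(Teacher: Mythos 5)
Your proposal is correct and follows essentially the same route as the paper: both expand the tail $f-f_k=\sum_{i>k}\gamma_i T_i$, use orthogonality (the paper via $\|T_i\|_C^2=\pi/2$, you via the equivalent orthonormal basis $\hat T_i$) to reduce $\|f-f_k\|_C^2$ to a sum of squared coefficients, and then bound that sum by the geometric series coming from $|\gamma_i|\le 2M(\rho)\rho^{-i}$. The normalization bookkeeping you flag is handled correctly, and the resulting constants match the paper's.
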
  
\begin{proof} 
For any function $f$ expandable as in \nref{eq:expg}, we have
\[
f(t) - f_k(t) = \sum_{i=k+1}^\infty \gamma_i T_i(t).
\] 
Because of the orthogonality of the Chebyshev polynomials and the inequality (\ref{eq:trf4}), we obtain
\begin{align*} 
\|f(t) - f_k(t) \|_C^2 
&= \sum_{i=k+1}^\infty |\gamma_i|^2  \| T_i(t)\|_C^2 \le \sum_{i=k+1}^\infty 4 M(\rho)^2 \rho^{-2 i} \frac{\pi}{2} \\
&\le 2 M(\rho)^2 \pi \rho^{-2 (k+1)} \frac{1}{1-\rho^{-2}}= 2 M(\rho)^2 \pi \rho^{-2 k } \frac{1}{\rho^2-1}. 
\end{align*} 
Taking the square root and replacing the values of $M(\rho)$ 
from Lemma~\ref{lem:MBnds} yield the two inequalities. 
\end{proof} 

Both Theorem \ref{thm:Main} and Proposition \ref{prop:infNrm} show that the Chebyshev expansions $g_{k_1}$ and  $q_{k_2}$ converge geometrically. The plot in Fig.~\ref{fig:Deg8LS_sqrt} indicates that a low degree is sufficient to reach a reasonable accuracy for the needs of computing the DOS.

\begin{figure} 
\includegraphics[width=0.49\textwidth]{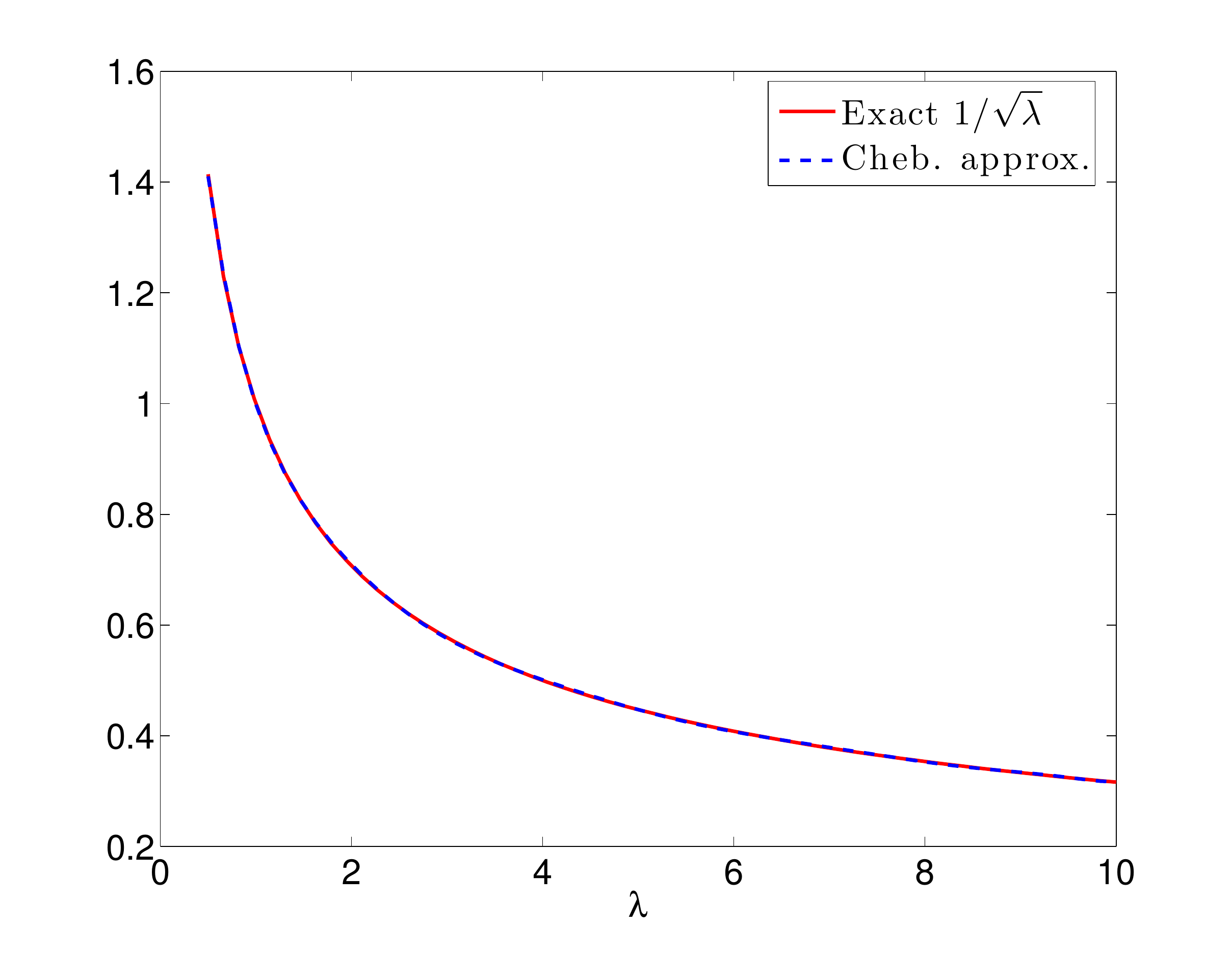}
\includegraphics[width=0.49\textwidth]{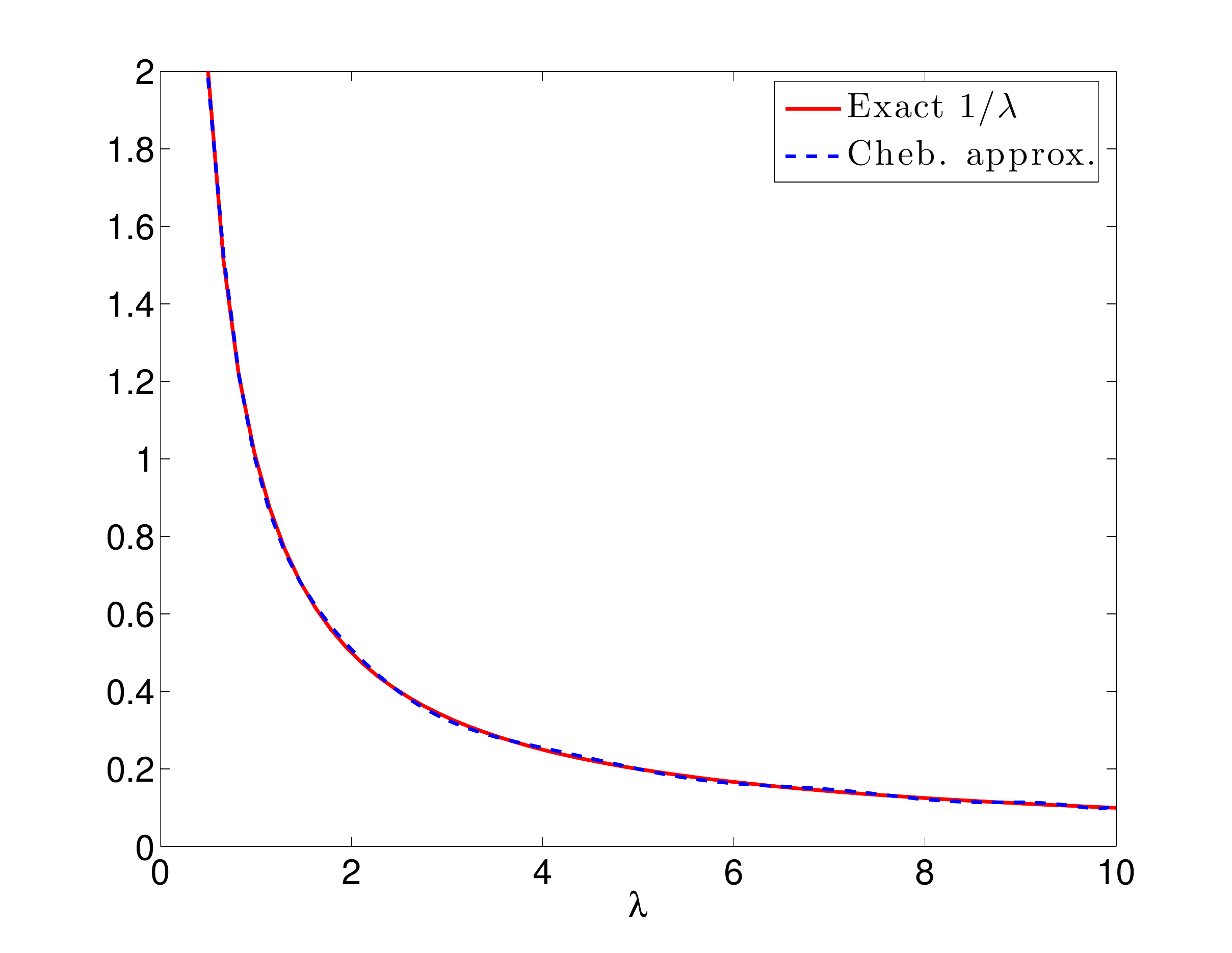}
\label{fig:Deg8LS_sqrt}
\caption{Degree $10$ Chebyshev polynomial approximations to $1/\sqrt{\lambda}$ and $1/\lambda$ on the interval $[0.5,\ 10]$.}
\end{figure}

\subsection{Bounds involving the condition number of $B$}
Theorem \ref{thm:Main} shows that the \emph{asymptotic} convergence rate increases with
$\rho$. However, the ``optimal" value of $\rho$, i.e., the one that yields the smallest bounds in
\nref{eq:trfg} or \nref{eq:trfq},
 depends on $k_i$ and is hard to choose in practice.  Here, we will discuss two  simple 
choices for $\rho$ that will help analyze the convergence.
First,  we select $\rho = \rho_0 \equiv c/h$  which
satisfies the bounds \nref{eq:rhoBnds}. It leads to 
\eq{eq:Mrho0}
M_g (\rho_0 ) = 
\frac{\sqrt{2}}{\sqrt{c - h^2/c}} ,
\qquad 
M_q (\rho_0) = (M_g (\rho_0 ))^2 \ .
\en 
Note that in the context of our problem, if we denote 
by $\lambda_{max} (B), \lambda_{min} (B) $ the largest and smallest
eigenvalues of $B$ and by $\kappa $ its spectral condition number, then
\[ \rho_0 = c/h 
= \frac{\lambda_{max} (B) + \lambda_{min} (B) }
{\lambda_{max} (B) - \lambda_{min} (B) } 
= \frac{\kappa +1} {\kappa - 1} ,
\]
and therefore, for this choice of $\rho$,
 the bounds of the theorem evolve asymptotically like 
$(\frac{\kappa- 1} {\kappa +1})^{k} $.
A slightly more elaborate selection of $\rho$  is the value for which
$(\rho+\rho\inv)/2  = \sqrt{c/h}$ which is
$\rho_1 = \sqrt{c/h} + \sqrt{(c/h) - 1} $.
 For $t\ge 1$,  \ 
 $t+\sqrt{t^2-1}$ is an increasing function and therefore,
$1 \le \rho_1 \le (c/h) + \sqrt{(c/h)^2 - 1}$ and so the bounds 
\nref{eq:rhoBnds} are satisfied. With this we get:
\[
M_g (\rho_1 ) = 
\frac{1}{\sqrt{c - \sqrt{h c}}} , 
\qquad 
M_q (\rho_1) = (M_g (\rho_1 ))^2\ .
\]
In addition, we note that  $\rho_1 $ can also be expressed in terms of
the  spectral   condition  number   $\kappa$  of   $B$ as follows:  $   \rho_1  =
[\sqrt{\kappa  +1 }  + \sqrt{2}]/[\sqrt{\kappa  -1 }]$.  The resulting
term  $\rho_1^{-k}$ in  \nref{eq:trfg} and  \nref{eq:trfq} will  decay
much faster than $\rho_0^{-k}$ when  $\kappa$ is larger than $2$. Both
choices of $\rho$  show that for a fixed degree  $k$, a smaller $\kappa$
will result in  faster  convergence.

If $B$ is  a mass matrix obtained from a  FEM discretization, $\kappa$
can  become very  large for  a  general nonuniform  mesh.  One  simple
technique to reduce  the value of $\kappa$ is to  use diagonal scaling
\cite{mass,Wathen1987,Wathen2008}.             Suppose             $D=
\operatorname{diag}(B)$,  then by congruence,
 the  following problem  has  the  same
eigenvalues as (\ref{eq:Pb}) 
\begin{equation}
D^{-1/2}AD^{-1/2} z = \lambda D^{-1/2}BD^{-1/2}z, \quad \mbox{with} \quad z=D^{1/2}  x.
\label{eq:scaling}
\end{equation}
It was shown in  \cite{Wathen1987,Wathen2008} that, for any conforming
mesh   of    tetrahedral   (P1)   elements   in    three   dimensions,
$\kappa(D^{-1/2}BD^{-1/2})$  is  bounded by  $5$  and  for a  mesh  of
rectangular    bi-linear   (Q1)    elements    in   two    dimensions,
$\kappa(D^{-1/2}BD^{-1/2})$ is bounded by $9$. Moreover, this diagonal
scaling  technique  has also been exploited   to  reduce  the  spectral
condition  number   of  graph  Laplacians  in   the  network  analysis
\cite{graph16}.  As a  result, we  will always  preprocess the  matrix
pencil $(A,B)$ by diagonal scaling before computing the DOS.

With the approximations in (\ref{eq:expg}), we obtain 
\begin{align}
B^{-1} &\approx g_{k_1}(B) :=\sum_{i=0}^{k_1}
\gamma_i T_i[(B-cI)/h],\label{eq:eq1} \\
B^{-1/2} &\approx q_{k_2}(B):=\sum_{i=0}^{k_2}
\beta_i T_i[(B-cI)/h].\label{eq:eq2}
\end{align}

Using  the above  approximations to replace $B^{-1}$ and $B^{-1/2}$
in (\ref{eq:stdB})  and (\ref{eq:stdS}), will amount to  computing
the DOS of the modified problem
\begin{equation}
\label{eq:g_m}
g_{k_1}(B)A \tilde{x}= \tilde{\lambda} \tilde{x}.
\end{equation}
Therefore,  it  is  important  to   show  that  the  distance  between
$\tilde{\lambda}$ and $\lambda$  is small when  $g_{k_1}$ and $q_{k_2}$ 
reach a certain accuracy. We will need the following 
perturbation result for   Hermitian definite pencils.

 \begin{theorem}\label{thm:weyl} 
   \cite[Theorem  2.2]{NAKATSUKASA2010242}  Suppose that  a  Hermitian
   definite  pencil $(A,B)$  has eigenvalues  $\lambda_1\leq \lambda_2
   \leq \dots \leq  \lambda_n$. If $\Delta A, \Delta  B$ are Hermitian
   and $||\Delta  B||_2<\lambda_{min}(B)$, then  $(A+\Delta A,B+\Delta
   B)$   is   a   Hermitian    definite   pencil   whose   eigenvalues
   $\hat{\lambda}_1\leq     \hat{\lambda}_2     \leq    \dots     \leq
   \hat{\lambda}_n$ satisfy
\begin{align}\label{eq:weylbound} 
| \lambda_i - \hat{\lambda}_i| \leq \frac{||\Delta A||_2}{\lambda_{min}(B)}+\frac{|\lambda_i|\lambda_{min}(B)+||\Delta A||_2}{\lambda_{min}(B)(\lambda_{min}(B)-||\Delta B||_2)}||\Delta B||_2.
\end{align}
\end{theorem}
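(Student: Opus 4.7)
The plan is to prove the theorem in two steps: first verify that the perturbed pencil remains Hermitian definite so the $\hat\lambda_i$ are well-defined, then derive the eigenvalue bound by splitting the total perturbation into a change in $A$ followed by a change in $B$. For the first step, since $\Delta B$ is Hermitian and $\|\Delta B\|_2 < \lambda_{\min}(B)$, Weyl's inequality for Hermitian matrices gives $\lambda_{\min}(B+\Delta B) \ge \lambda_{\min}(B) - \|\Delta B\|_2 > 0$, so $B+\Delta B$ is Hermitian positive definite and $(A+\Delta A, B+\Delta B)$ is indeed a Hermitian definite pencil with $n$ real eigenvalues.

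For the eigenvalue bound, I introduce the intermediate pencil $(A+\Delta A, B)$ with ordered eigenvalues $\tilde\lambda_1 \le \cdots \le \tilde\lambda_n$ and split $|\lambda_i-\hat\lambda_i| \le |\lambda_i-\tilde\lambda_i| + |\tilde\lambda_i-\hat\lambda_i|$. The first piece is straightforward: the eigenvalues of $(A,B)$ and of $(A+\Delta A, B)$ coincide with those of the Hermitian matrices $B^{-1/2}AB^{-1/2}$ and $B^{-1/2}(A+\Delta A)B^{-1/2}$ respectively, so Weyl's inequality yields
\[
|\lambda_i - \tilde\lambda_i| \le \|B^{-1/2}\,\Delta A\, B^{-1/2}\|_2 \le \frac{\|\Delta A\|_2}{\lambda_{\min}(B)}.
\]
For the second piece I would use the Courant-Fischer characterization applied to the Rayleigh quotients $\sigma(u)=u^{*}(A+\Delta A)u / u^{*}Bu$ and $\rho(u)=u^{*}(A+\Delta A)u / u^{*}(B+\Delta B)u$. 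A direct manipulation gives the multiplicative identity $\rho(u) = \sigma(u)\cdot u^{*}Bu / u^{*}(B+\Delta B)u$, whence
\[
|\rho(u) - \sigma(u)| \le \frac{|\sigma(u)|\,\|\Delta B\|_2}{\lambda_{\min}(B) - \|\Delta B\|_2}
\]
for every nonzero $u$. Feeding this estimate into the min-max characterizations of $\tilde\lambda_i$ and $\hat\lambda_i$, with a careful choice of optimal subspaces, produces
\[
|\tilde\lambda_i - \hat\lambda_i| \le \frac{|\tilde\lambda_i|\,\|\Delta B\|_2}{\lambda_{\min}(B) - \|\Delta B\|_2}.
\]
Finally, the bound $|\tilde\lambda_i| \le |\lambda_i| + \|\Delta A\|_2/\lambda_{\min}(B)$ obtained from the first piece is substituted into this inequality and combined with the bound on $|\lambda_i-\tilde\lambda_i|$; routine algebraic simplification then collapses to \nref{eq:weylbound}.

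I expect the main obstacle to be the passage from the pointwise estimate $|\rho(u)-\sigma(u)| \le \varepsilon\,|\sigma(u)|$ to an eigenvalue bound with $|\tilde\lambda_i|$ on the right-hand side. Because $|\sigma(u)|$ on an optimal $i$-dimensional subspace need not be dominated by $|\tilde\lambda_i|$ (the subspace may contain eigenvectors associated to eigenvalues of both signs whose magnitudes exceed $|\tilde\lambda_i|$), one must select the min-max subspaces carefully and perform a case analysis on the sign of $\tilde\lambda_i$; this is the step where the argument departs from a textbook application of Weyl's inequality.
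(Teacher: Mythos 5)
The paper does not prove this theorem at all: it is quoted verbatim from Nakatsukasa's paper (Theorem 2.2 of \cite{NAKATSUKASA2010242}), so there is no in-paper proof to compare against. Judged on its own, your plan is essentially the standard (and correct) route to this bound: the definiteness check via Weyl's inequality is fine, the splitting through the intermediate pencil $(A+\Delta A,\,B)$ is exactly what makes the two terms of \nref{eq:weylbound} appear, the bound $|\lambda_i-\tilde\lambda_i|\le \|\Delta A\|_2/\lambda_{\min}(B)$ is correct, and your final algebra ($\varepsilon\,|\tilde\lambda_i|$ with $|\tilde\lambda_i|\le|\lambda_i|+\|\Delta A\|_2/\lambda_{\min}(B)$) reproduces the stated bound exactly.

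The one step that does not work as literally written is feeding the symmetric additive estimate $|\rho(u)-\sigma(u)|\le \varepsilon\,|\sigma(u)|$, $\varepsilon=\|\Delta B\|_2/(\lambda_{\min}(B)-\|\Delta B\|_2)$, into Courant--Fischer. The hypothesis only requires $\|\Delta B\|_2<\lambda_{\min}(B)$, so $\varepsilon$ may exceed $1$; then for $\sigma(u)<0$ the upper bound $\rho(u)\le(1-\varepsilon)\sigma(u)$ is a possibly huge positive number, and on the optimal $i$-dimensional subspace (which may contain vectors with $\sigma(u)=\tilde\lambda_1\ll 0$) you cannot conclude $\max_u\rho(u)\le\tilde\lambda_i+\varepsilon|\tilde\lambda_i|$. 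This is precisely the obstacle you flagged, and the repair is to use the multiplicative identity you already derived rather than its additive consequence: $\rho(u)=\theta(u)\,\sigma(u)$ with
\[
\theta(u)=\frac{u^{*}Bu}{u^{*}(B+\Delta B)u}\in
\Bigl[\tfrac{\lambda_{\min}(B)}{\lambda_{\min}(B)+\|\Delta B\|_2},\;
\tfrac{\lambda_{\min}(B)}{\lambda_{\min}(B)-\|\Delta B\|_2}\Bigr],
\]
whose lower endpoint is always strictly positive. With $\theta(u)>0$ and $\theta(u)\le 1+\varepsilon$, a short case analysis on the sign of $\sigma(u)$ shows that $\sigma(u)\le\tilde\lambda_i$ implies $\theta(u)\sigma(u)\le\tilde\lambda_i+\varepsilon|\tilde\lambda_i|$, and the max--min characterization gives the matching lower bound; equivalently, apply Ostrowski's theorem to the congruence $(B+\Delta B)^{-1/2}(A+\Delta A)(B+\Delta B)^{-1/2}=W^{*}\bigl(B^{-1/2}(A+\Delta A)B^{-1/2}\bigr)W$ with $W=B^{1/2}(B+\Delta B)^{-1/2}$, which yields $\hat\lambda_i=\theta_i\tilde\lambda_i$ with $\theta_i$ in the bracket above. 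With that substitution your argument closes completely.
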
  
In  the context  of  (\ref{eq:g_m}), the  perturbation  $\Delta B$  in
Theorem \ref{thm:perturb}  corresponds to the  approximation error
of  $g_{k_1}(B)$  to  $B^{-1}$.  This  implies  that  we  can  rewrite
(\ref{eq:g_m}) in the form of
\[
A \tilde{x}= \tilde{\lambda} (B+\Delta B)\tilde{x} \quad \text{with} \quad \Delta B = (g_{k_1}(B))^{-1}-B,
\]
and then apply Theorem \ref{thm:weyl} to prove the following perturbation bound for (\ref{eq:g_m}).
\begin{theorem}\label{thm:perturb} 
Let $\lambda_1\leq \lambda_2 \leq \dots \leq \lambda_n$ be the eigenvalues of $B^{-1}A$ and $\hat{\lambda}_1\leq \hat{\lambda}_2 \leq \dots \leq \hat{\lambda}_n$ be the eigenvalues of $g_{k_1}(B)A$.
If $||g-g_{k_1}||_{\infty} \leq \tau$ and $||B||_2 \leq 1/\tau$, then we have
 \begin{align}\label{eq:weylbound2} 
| \lambda_i - \hat{\lambda}_i| \leq \frac{|\lambda_i|}{\lambda_{min}(B)-||\Delta B||_2}||\Delta B||_2,
\end{align}
with $||\Delta B||_2\leq \frac{||B||^2_{2}}{1-||B||_2\tau}\tau$.
\end{theorem}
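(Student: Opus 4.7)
The plan is to use the hint given just before the theorem statement: view the modified problem $g_{k_1}(B) A \tilde x = \tilde\lambda \tilde x$ as a perturbed generalized eigenvalue problem, and then invoke Theorem~\ref{thm:weyl}. Setting $G := g_{k_1}(B)$ and $\Delta B := G^{-1} - B$, we rewrite the modified problem equivalently as $A\tilde x = \tilde\lambda(B + \Delta B)\tilde x$. Thus $(A,B+\Delta B) = (A+\Delta A, B+\Delta B)$ with $\Delta A = 0$. Since $B$ is Hermitian and $g_{k_1}$ is a real polynomial, $G$ is Hermitian; for the quoted bound to be meaningful we will need $G$ positive definite, equivalently $\|\Delta B\|_2 < \lambda_{\min}(B)$, which is the only nontrivial hypothesis of Theorem~\ref{thm:weyl} that must be checked (and which is implicit in the statement's denominator).

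With $\Delta A = 0$, the Weyl-type bound \eqref{eq:weylbound} collapses, after dividing the numerator and denominator of the second term by $\lambda_{\min}(B)$, to exactly \eqref{eq:weylbound2}. So the first inequality is just Theorem~\ref{thm:weyl} specialized to an unperturbed $A$; there is no real work there once the reformulation is in place.

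The substantive step is to control $\|\Delta B\|_2$. By functional calculus for the Hermitian matrix $B$, the approximation bound $\|g - g_{k_1}\|_\infty \le \tau$ on an interval containing $\operatorname{spec}(B)$ yields $\|G - B^{-1}\|_2 \le \tau$, so I can write $G = B^{-1} + E$ with $\|E\|_2 \le \tau$. The key algebraic identity is
\[
\Delta B = G^{-1} - B = G^{-1}\bigl(B^{-1} - G\bigr) B,
\]
obtained by factoring $G^{-1} - B = G^{-1}(I - GB)$ and noting $I - GB = (B^{-1} - G)B$. Taking norms gives $\|\Delta B\|_2 \le \|G^{-1}\|_2 \, \tau \, \|B\|_2$. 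To bound $\|G^{-1}\|_2$, I factor $G = B^{-1}(I + BE)$; the hypothesis $\|B\|_2 \le 1/\tau$ (which I will read as strict for invertibility) forces $\|BE\|_2 < 1$, so a Neumann-series estimate yields
\[
\|G^{-1}\|_2 = \|(I+BE)^{-1} B\|_2 \le \frac{\|B\|_2}{1 - \|B\|_2\tau}.
\]
Plugging this in produces the stated bound $\|\Delta B\|_2 \le \|B\|_2^2 \tau/(1 - \|B\|_2\tau)$.

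The main obstacle, such as it is, is bookkeeping: verifying that the positivity condition $\|\Delta B\|_2 < \lambda_{\min}(B)$ needed to legitimately apply Theorem~\ref{thm:weyl} is automatic (or at least should be flagged) under a mildly strengthened assumption on $\tau$, and correctly translating the uniform polynomial approximation bound on $g-g_{k_1}$ into the operator-norm bound $\|G - B^{-1}\|_2 \le \tau$ via the spectral theorem. Once these two pieces are in place, the proof is essentially a one-line identity followed by a Neumann series.
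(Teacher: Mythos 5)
Your proposal is correct and follows essentially the same route as the paper: recast $g_{k_1}(B)A\tilde x=\tilde\lambda\tilde x$ as the pencil $(A,\,B+\Delta B)$ with $\Delta A=0$ and $\Delta B=(g_{k_1}(B))^{-1}-B$, bound $\|\Delta B\|_2$ from $\|g-g_{k_1}\|_\infty\le\tau$, and apply Theorem~\ref{thm:weyl}. The only cosmetic difference is that the paper bounds $\|\Delta B\|_2$ scalar-wise in the common eigenbasis of $B^{-1}$ and $g_{k_1}(B)$, obtaining $\max_i|1/\theta_i-1/\hat\theta_i|\le\tau/(\theta_1(\theta_1-\tau))$ with $\theta_1=1/\|B\|_2$, whereas you reach the identical bound through a commuting-matrix identity plus a Neumann series; your caveats that $\|B\|_2\tau$ must be strictly less than $1$ and that $\|\Delta B\|_2<\lambda_{\min}(B)$ is needed to invoke Theorem~\ref{thm:weyl} are legitimate points the paper leaves implicit.
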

\begin{proof}
Denote by $\theta_{1},\theta_{2},\ldots,\theta_{n}$ the eigenvalues of $B^{-1}$ and $\hat{\theta}_{1},\hat{\theta}_{2},\ldots,\hat{\theta}_{n}$ the eigenvalues of $g_{k_1}(B)$. Since $||g-g_{k_1}||_{\infty} \leq \tau$, we have
\begin{equation}
||B^{-1}-g_{k_1}(B)||_2 = \max_{i} |\theta_i-\hat{\theta}_i|\leq \tau.
\end{equation}
On the other hand, we know that
\begin{align}\label{eq:deltaB}
||\Delta B||_2=||B-(g_{k_1}(B))^{-1}||_2 &= \max_{i} |1/\theta_i-1/\hat{\theta}_i|  \leq \frac{\tau}{\theta_{1}(\theta_{1}-\tau)}=\frac{||B||^2_{2}}{1-||B||_2\tau}\tau. 
\end{align}
Substituting $||\Delta A||_2$ and $||\Delta B||_2$ in (\ref{eq:weylbound}) with 0 and (\ref{eq:deltaB}), respectively, we obtain the bound (\ref{eq:weylbound2}).
\end{proof}

Theorem  \ref{thm:perturb} indicates  that  if the  degree of  the
Chebyshev  expansions  is  chosen  in  such  a  way  that  the  bounds
(\ref{eq:maxg}--\ref{eq:maxq}) are  less than or equal  to $\tau$, the
eigenvalues  of  (\ref{eq:g_m}) would  be  close  enough to  those  of
(\ref{eq:stdB}). In the next two sections,  we will show how to extend
the  standard   algorithms  for  computing  the   DOS  to  generalized
eigenvalue problems of the form (\ref{eq:g_m}).

\section{The Kernel Polynomial Method} \label{sec:kpm} 
The Kernel Polynomial Method (KPM) is an effective technique 
proposed by physicists and chemists in the mid-1990s 
~\cite{DraboldSankey1993,ParkerZhuHuangEtAl1996,SilverRoder1994,SilverRoder1997,SilverRoederVoterEtAl1996,
Wang1994} to calculate the DOS of a Hermitian matrix $A$.
Its essence is to expand the function $\phi$ in 
\nref{eq:DOS0}, which is a sum
of Dirac $\delta$-functions, into   Chebyshev polynomials.

\subsection{Background: The KPM for standard eigenvalue problems} 
\label{sec:kpmstd} 
As is the case for all methods which rely on Chebyshev expansions,
a change of variables is first performed to map the interval
$[ \lambda_{\min},\ \lambda_{\max}]$ into $[-1, \ 1]$.
We assume this is already performed and so
 the eigenvalues are in the interval $[-1, \ 1]$.  
 To estimate the spectral density function \nref{eq:DOS0}, the KPM method approximates $\phi(t)$ by a finite expansion 
in a basis of orthogonal polynomials, in this case, 
 Chebyshev polynomials of the first kind.  Following
 the Silver-R\"oder paper~\cite{SilverRoder1994},  we include, 
for convenience,  
the inverse of the weight function into the spectral density function, 
so we expand instead the distribution:
\eq{eq:tdos}
\hat \phi(t)  = 
\sqrt{1-t^2} \phi (t) =
\sqrt{1-t^2}  \times \frac{1}{n}
\sum_{j=1}^n  \delta(t - \lambda_j).
\en 
Then, we have the (full) expansion
\eq{eq:exp1} 
\hat \phi(t)  = \sum_{k=0}^\infty  \mu_k T_k(t), 
\en 
where the expansion coefficients $\mu_k$ are formally defined by
\begin{align}  
\mu_k & = 
 \frac{2-\delta_{k0}}{\pi} \int_{-1}^1 \frac{1}{\sqrt{1 - t^2}} T_k(t) 
\hat \phi (t) dt 
\nonumber  = \frac{2-\delta_{k0}}{n\pi}  \sum_{j=1}^n  T_k(\lambda_j) . 
\label{eq:expmu} 
\end{align} 

Thus, apart from the scaling factor $(2-\delta_{k0})/(n\pi)$, 
 $\mu_k$ is  the trace of $T_k(A)$ and this can be estimated
by various methods including, but not limited to, stochastic approaches.
There are  variations on this idea starting with the use
of different orthogonal polynomials, to alternative ways in which the 
traces can be estimated.

The standard   stochastic  argument   for
estimating $\tr (T_k(A))$,
see~\cite{Hutchinson1989,SilverRoder1994,TangSaad2012},   
entails generating  a large number of random
vectors $v^{(1)}_{0}, v^{(2)}_{0},  \cdots, v^{(\nvec)}_{0}$ with each
component obtained from a normal  distribution with zero mean and unit
standard  deviation,   and  each   vector  is  normalized   such  that
$\norm{v^{(l)}_{0}}_2=1,l=1,\ldots,\nvec$.  The  subscript $0$  is added
to indicate that the vector has not been multiplied by the matrix $A$.
Then   we  can   estimate  the   trace  of   $T_k  (A)$   as  follows:
\eq{eq:guesstrace}      \tr     (      T_k      (A)     )      \approx
\frac{1}{\nvec}\sum_{l=1}^{\nvec}  \left(v^{(l)}_{0}\right)^T T_k  (A)
v^{(l)}_{0},   \en  
where the error decays as $\frac{1}{\sqrt{n_{nev}}}$ \cite{Hutchinson1989}.
Then  this  will lead  to  the  desired  estimate:
\eq{eq:muEst}     \mu_k    \approx     \frac{2-\delta_{k0}}{n\pi\nvec}
\sum_{l=1}^{\nvec}  \left(v^{(l)}_{0}\right)^T  T_k  (A)  v^{(l)}_{0}.
\en
 
 Consider the computation of each term 
$v_{0}^T T_k (A) v_{0}$ (the superscript $l$ is dropped for
simplicity).  The 3-term recurrence of the
Chebyshev polynomial:
$T_{k+1} (t) = 2 t T_{k} (t) - T_{k-1} (t)$ 
 can be exploited to compute $T_k(A)v_{0}$, 
so that, if we let $v_{k} \equiv T_k(A) v_{0}$, we have
\eq{eq:3term2}
 v_{k+1} = 2 A v_{k} - v_{k-1}.
 \en

The approximate density of states will be limited to Chebyshev polynomials of
degree $m$, so $\phi$ is approximated by the truncated expansion:
\eq{eq:tdosFinal} 
\wt{\phi}_m (t) = 
\frac{1}{\sqrt{1-t^2}} \sum_{k=0}^m \mu_k T_k (t) .
\en 
It has been proved in \cite{Lin2017} that the expansion error in (\ref{eq:tdosFinal}) decays as $\rho^{-m}$ for some constant $\rho\!>\!1$.

For a general matrix $A$ whose eigenvalues are not necessarily in the interval
$[-1, \ 1]$, a linear transformation is first applied to $A$ to bring
its eigenvalues to the desired interval. Specifically, we will apply the 
method to the matrix
\begin{equation}
\tilde{A} = \frac{A - c I}{h},
\label{eq:bounds}
\end{equation}
where
\begin{equation}
c = \frac{\lambda_{\min} + \lambda_{\max}}{2} \ , 
\quad 
h = \frac{\lambda_{\max} - \lambda_{\min}}{2}.
	\label{eqn:ch}
\end{equation}
It is important to ensure that the eigenvalues of $\tilde{A}$ are within the
interval $[-1, \ 1]$. In an application requiring a similar
approach~\cite{ZhouSaadTiagoEtAl2006}, we obtain the upper and lower
bounds of the spectrum from Ritz values provided by a standard Lanczos
iteration.  We ran $m$ Lanczos steps but extended the interval
$[\lambda_{\min}, \ \lambda_{\max}]$ by using the bounds obtained from the
Lanczos algorithm. Specifically,  the upper bound is set to $\wt{\lambda}_{m} + \eta$ where 
$\eta = \| (A - \wt{\lambda}_{m} I) \wt{u}_m \|_2$,
and $(\wt{\lambda}_m, \wt{u}_m)$ is the (algebraically) 
largest Ritz pair of $A$. In a similar way, the lower bound is set to $\wt{\lambda}_{1} - \beta$ where $\beta = \| (A - \wt{\lambda}_{1} I) \wt{u}_1 \|_2$ and $(\wt{\lambda}_1, \wt{u}_1)$ is the (algebraically) 
smallest Ritz pair of $A$. To summarize, we outline the major steps of the KPM for approximating 
the spectral density of a  Hermitian matrix in Algorithm~\ref{alg:kpm}. 

\begin{algorithm}
\begin{small}
\begin{center}
  \begin{minipage}{5in}
\begin{tabular}{ll}
{\bf Input}:  &  \begin{minipage}[t]{4.0in}
                 A Hermitian matrix $A$, a set of points $\{t_{i}\}$
                 at which DOS is to be evaluated, the degree $m$ of
                 the expansion polynomial
                 \end{minipage}\\
{\bf Output}: &  Approximate DOS evaluated at $\{t_i\}$ \\
\end{tabular}
\begin{algorithmic}[1]
\STATE Compute the upper bound and the lower bound of the spectrum of $A$
\STATE Compute $c$ and $h$ in (\ref{eqn:ch}) with those bounds
\STATE Set $\mu_k = 0 $ for $k=0,\cdots, m$
\FOR {$l=1:\nvec$}
\STATE Select a new random vector $v_0^{(l)}$
\FOR {$k=0:m$}
\STATE Compute $T_{k} ((A-cI)/h) v_0^{(l)}$ using 3-term recurrence \nref{eq:3term2}
\STATE Update $\mu_k$ using \nref{eq:muEst}
\ENDFOR
\ENDFOR
\STATE Evaluate the average value of $\{\wt{\phi}_{m}((t_i-c)/h)\}$ at the given set
of points $\{t_{i}\}$ using~\eqref{eq:tdosFinal}
\end{algorithmic}
\end{minipage}
\end{center}
\end{small}
\caption{The Kernel Polynomial Method}
\label{alg:kpm}
\end{algorithm}

\subsection{The KPM for generalized eigenvalue problems}
\label{sec:kpmgen} 
We now  return to the generalized  problem \nref{eq:Pb}.  Generalizing
the KPM algorithm to this case is straightforward when the square root
factorization  $B=S^2$ or  the  Cholesky  factorization $B=LL^{T}$  is
available:  we  just  need  to use  Algorithm~\ref{alg:kpm}  with  $A$
replaced by  $S\inv A  S\inv$ or $L^{-1}AL^{-T}$.  In this  section we
only  discuss   the  case where a square  root   factorization is used.  
The alternative of  using the  Cholesky  factorization can  be carried out in  a
similar  way.  Clearly  $S\inv  A   S\inv$  needs  not  be  explicitly
computed. Instead, the product $S\inv A S\inv w$ that is required  when computing
$T_k((S\inv A S\inv - cI)/h) v_0^{(l)}$  in Line 7 of Algorithm 1, can
be  approximated  by  matrix-vector   products  with  $q_{k_2}(B)$  in
(\ref{eq:eq2}) and the matrix $A$.


The important point here is that if we simply follow the 3-term recurrence (\ref{eq:3term2}) and let $v_{k} \equiv T_k((S^{-1}AS^{-1}-cI)/h) v_{0}$, we have
\eq{eq:3term3} v_{k+1} = 2 \frac{S^{-1}AS^{-1}-cI}{h} v_{k} - v_{k-1}. \en 
This implies that the computation of each $v_k$ will involve two matrix-vector products with $S^{-1}$ and one matrix-vector product with $A$. On the other hand, premultiplying both sides of (\ref{eq:3term3}) with $S^{-1}$ leads to 
\begin{align*}
 S^{-1}v_{k+1} &= 2 S^{-1}\frac{S^{-1}AS^{-1}-cI}{h} v_{k} - S^{-1}v_{k-1}\\
  & = 2 \frac{B^{-1}A-cI}{h} S^{-1}v_{k} - S^{-1}v_{k-1}.
\end{align*} 
Denoting by $w_k := S^{-1}v_{k}$,  we obtain another 3-term recurrence
\eq{eq:3term4}  w_{k+1}   =  2\frac{B^{-1}A-cI}{h}w_{k}-w_{k-1}  \quad
\text{with}\quad w_0 =  S^{-1}v_{0}.  \en Now the  computation of each
$w_{k}$ only involves one matrix-vector  product with $B^{-1}$ and one
matrix-vector  product with  $A$. Since  Theorem \ref{thm:Main}  shows
that both the approximation errors of $g_{k_1}$ and $q_{k_2}$ decay as
$\rho^{-k_i}$,  this indicates  that the  same approximation  accuracy
will  likely  lead  to  roughly  the same  degree  for  $g_{k_1}$  and
$q_{k_2}$.    As   a    result,   recurrence    (\ref{eq:3term4})   is
computationally more economical than recurrence (\ref{eq:3term3}) when
we replace  $B^{-1}$ and $S^{-1}$ with  $g_{k_1}(B)$ in (\ref{eq:eq1})
and  $q_{k_2}(B)$   in  (\ref{eq:eq2}),  respectively.  In   the  end,
$v_{0}^{T}T_{k}((S^{-1}AS^{-1}-cI)/h)v_{0}$  in   (\ref{eq:muEst})  is
computed as $w_{0}^{T}Bw_{k}$.

Similarly, if Cholesky factorization of $B$ is applied, then the following 3-term recurrence is preferred in actual computations
\eq{eq:3term5}
w_{k+1} = 2\frac{B^{-1}A-cI}{h}w_{k}-w_{k-1} \quad \text{with}\quad w_0 = L^{-T}v_{0}.
\en
 

\section{The Lanczos method for Density of States}\label{sec:lan}
The well-known connection between the Gaussian Quadrature and the Lanczos
algorithm has also been exploited to compute the DOS
\cite{DOSpaper16}. We first review the method for standard problems
before extending it to matrix pencils.


\subsection{Background: The Lanczos procedure for the standard DOS}
The Lanczos algorithm  builds an orthonormal basis 
$V_m = [v_1, v_2, \cdots, v_m]$ for the \emph{Krylov subspace:} 
$ \Span \{v_1, Av_1, \cdots, A^{m-1} v_1 \} $ with an initial vector $v_1$. See Algorithm~\ref{alg:LanA} for a summary. 

\begin{algorithm}[H]
\begin{algorithmic}[1]  \label{alg:LanA}
\caption{Lanczos algorithm for a Hermitian matrix $A$}
\STATE Choose an initial vector $v_1$ with $\norm{v_1}_2=1$ and
set $\beta_1=0$, $v_0=0$
\FOR {$j=1,2,\ldots, m$}
\STATE $w:=Av_j-\beta_j v_{j-1}$
\STATE $\alpha_j=(w,v_j)$
\STATE $w:=w-\alpha_j v_j$
\STATE Full reorthogonalization: $w:=w-\sum_i(w,v_i)v_i$ for $i\le j$
\STATE $\beta_{j+1}= \| w\|_2$ 
\STATE If $\beta_{j+1}==0$ restart or exit 
\STATE $v_{j+1}:= w / \beta_{j+1}$
\ENDFOR
\end{algorithmic}
\end{algorithm}

At the completion of  $m$ steps of Algorithm \ref{alg:LanA}, we end up with the 
factorization  $V_m^T A V_m = T_m$  - with
\[ 
T_m = \begin{pmatrix} \alpha_1 & \beta_2   &        &       &     &    \cr
                \beta_2  & \alpha_2  &\beta_3 &       &     &    \cr
                        & \beta_3  & \alpha_3&\beta_4 &     &    \cr
                        &           &    .    &  .    &  .   &    \cr
                        &           &         &  .    &  .   &\beta_m   \cr
                        &           &         &       & \beta_m&\alpha_m  
\end{pmatrix} .
\] 
Note that the vectors $v_j$, for $j=1,\cdots,m, $ satisfy the 3-term recurrence
\[ 
\beta_{j+1} v_{j+1}  = A v_j - \alpha_j v_j - \beta_j v_{j-1}.
\]
In theory the $v_j$'s defined by this recurrence are orthonormal.
In  practice there is a severe loss of orthogonality and a form of
reorthogonalization (Line 6 in Algorithm 2) is necessary.

Let $\theta_i, \ i=1\cdots, m$ be  the eigenvalues of $T_m$. These are
termed \emph{Ritz  values}. If $\{ y_i\}_{i=1:m}$,  are the associated
eigenvectors,  then  the vectors  $\{  V_m  y_i\}_{i=1:m}$ are  termed
\emph{Ritz  vectors}  and  they  represent  corresponding  approximate
eigenvectors of  $A$.  Typically, eigenvalues  of $A$ on both  ends of the
spectrum are first well approximated by corresponding  eigenvalues of $T_m$
(Ritz values) and, as more steps are taken, 
more and more eigenvalues toward
the inside of the spectrum become better approximations. 
Thus, one can say that the Ritz values approximate the eigenvalues of $A$ 
progressively from `outside in'.


One approach to compute the DOS is to  compute
 these $\theta_i$'s  and then get approximate DOS from them.
However, the $\theta_i$'s tend to provide poor approximations
to  the eigenvalues located at the  interior of the spectrum
 and so this approach does not work too well in practice.
A better idea is to exploit the relation between the Lanczos
procedure and the  (discrete)
orthogonal polynomials and the related Gaussian quadrature.

Assume the initial vector $v_1$ in the Lanczos method can be expanded in the eigenbasis of $A$ as $v_1 = \sum_{i=1}^{n}\omega_iu_i$.
Then the  Lanczos process builds orthogonal polynomials with respect to the discrete (Stieljes)
inner product:
\eq{eq:InProd} 
\int_{a}^{b} f(t) q(t) d\mu(t) \equiv (f(A)v_1, q(A)v_1),
\en 
where the measure $\mu(t)$ is a piecewise constant function defined as
  \begin{equation}
    \mu(t)=
    \begin{cases}
      0, & \text{if}\ t\!<\!a =\lambda_1, \\
      \sum_{j=1}^{i-1} \omega_j^2, & \text{if}\ \lambda_{i-1}\leq t\!<\!\lambda_i, \ i = 2:n,\\
      \sum_{j=1}^n \omega_j^2, & \text{if}\ b = \lambda_n\leq t.
    \end{cases}
  \end{equation}

In particular, when $q(t) = 1$, (\ref{eq:InProd}) takes the form of
\eq{eq:StInt} 
\int_{a}^{b} f(t) d\mu(t) \equiv (f(A)v_1, v_1) , 
\en 
which we will refer to as the Stieljes integral of $f$.
Golub and Welsh \cite{golubwel} showed how to 
extract Gaussian-quadrature formulas for integrals of the type
shown above. The integration nodes for 
a Gaussian quadrature formula with $m$ points, are simply the eigenvalue values $\theta_i, i=1,\cdots,m$  of $T_m$. The associated
weights are the squares of the first components of the eigenvectors
associated with $\theta_i$'s. Thus,
\eq{eq:GW}
\int_{a}^{b} f(t) d\mu(t) \approx \sum_{i=1}^m a_i f(\theta_i) \ ,   \quad a_i = 
\left[ e_1^T  y_i \right]^2 . 
\en
As is known, such an integration formula is  exact for polynomials of
 degree up to $2m-1$, see, e.g., \cite{Golub94matrices,golubwel}. Then we will derive an approximation to the DOS with the quadrature rule (\ref{eq:GW}).

The Stieljes integral 
$\int_{a}^{b} f(t) d\mu(t) $ satisfies the following equality:
\[
\int_{a}^{b} f(t) d\mu(t) = (f(A)v_1,v_1) = \sum_{i=1}^{n} \omega_i^2  
f(\lambda_i).
\]
We can view this  as a distribution $\phi_{v_1} $ applied to $f$:
\eq{eq:LandDis} (f(A) v_1, v_1) \equiv \left\langle \phi_{v_1}, f \right\rangle
 \quad  \mbox{with} \quad 
\phi_{v_1} \equiv \sum_{i=1}^{n} \omega_i^2 \delta (t- \lambda_i) . 
\en
Assume for a moment that we are able to find a special vector $v_1$ which satisfies $\omega_i^2 = 1/n$ for all $i$. Then
the above distribution becomes
$\phi_{v_1}=\frac{1}{n}\sum_{i=1}^{n} \delta(t-\lambda_i) $ which is exactly the DOS defined in (\ref{eq:DOS1}).
 Next,
 we consider how $\phi_{v_1}$ can be approximated via
Gaussian-quadrature. 
Based on (\ref{eq:GW}) and (\ref{eq:LandDis}), we know that
\[
\left\langle \phi_{v_1}, f \right\rangle \equiv (f(A)v_1,v_1) = \int_{a}^{b} f(t) d\mu(t)\approx \sum_{i=1}^{m} a_i f(\theta_i) 
\equiv 
\left\langle\sum_{i=1}^{m} a_i  \delta(t-\theta_i), f \right\rangle. 
\]
Since $f$ is an arbitrary function, we obtain the following approximation expressed for the DOS:
\eq{eq:LanQuadApp}
\phi_{v_1}  
\approx \tilde{\phi}_{v_1}:=\sum_{i=1}^{m} a_i \delta(t-\theta_i) .
\en

In the next theorem, we show that the approximation error of the Lanczos method for computing the DOS decays as $\rho^{-2m}$ for a constant $\rho \!>\!1$. Here, we follow (2.5) in \cite{DOSpaper16} to measure the approximation error between $\phi$ and $\tilde{\phi}_{v_1}$ as
\[
|\left\langle \phi, g \right\rangle - \left\langle \tilde{\phi}_{v_1}, g \right\rangle|, \quad \text{with} \ g(t) \ \text{being an analytic function on} \ [-1, 1].
\]
 \begin{theorem}\label{thm:errorLanczos}
   Assume $A  \in \mathbb{C}^{n\times n}$  is a Hermitian  matrix with
   its spectrum  inside $[-1, \  1]$. If $v_1\in \mathbb{R}^{n}$  is a
   unit vector with equal  weights in all eigenvectors of $A$,
   then   the    approximation   error    of   a    m-term   expansion
   (\ref{eq:LanQuadApp}) is
\begin{equation}
\label{eq:lanczosb}
|\left\langle \phi, g \right\rangle - \left\langle \tilde{\phi}_{v_1}, g \right\rangle| \leq \frac{4\rho^2M(\rho)}{(\rho^2-1)\rho^{2m}},
\end{equation}
where $\rho\!>\!1$ and $M(\rho)$ are constants.
\end{theorem}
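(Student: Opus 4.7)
The plan is to exploit the defining feature of Gaussian quadrature, namely its exactness on polynomials of degree up to $2m-1$, together with the Chebyshev convergence estimate in Theorem~\ref{thm:trf}. The hypothesis that $v_1$ has equal weights $\omega_i^2=1/n$ in the eigenbasis means, by \nref{eq:LandDis}, that $\phi_{v_1}$ coincides with the DOS $\phi$. The Lanczos--Gauss rule $\tilde\phi_{v_1}$ is, in turn, the $m$-point Gaussian quadrature rule associated with the measure $d\mu$ of this very distribution, so $\langle \tilde\phi_{v_1},p\rangle = \langle \phi,p\rangle$ for every polynomial $p$ of degree at most $2m-1$.

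First, I would let $g_{2m-1}$ denote the truncated Chebyshev expansion of $g$ of degree $2m-1$. Using exactness of Gauss quadrature on $g_{2m-1}$ and linearity, I would write
\begin{equation*}
\langle \phi,g\rangle-\langle \tilde\phi_{v_1},g\rangle
=\langle \phi,g-g_{2m-1}\rangle-\langle \tilde\phi_{v_1},g-g_{2m-1}\rangle,
\end{equation*}
so that the polynomial component cancels and only the Chebyshev tail survives.

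Next I would note that both $\phi$ and $\tilde\phi_{v_1}$ are probability measures on $[-1,1]$: $\phi$ carries total mass $1$ by construction, while the Gauss weights $a_i=(e_1^T y_i)^2$ sum to $\|v_1\|_2^2=1$. Consequently each of the two terms on the right is bounded in absolute value by $\|g-g_{2m-1}\|_\infty$, yielding
\begin{equation*}
\bigl|\langle \phi,g\rangle-\langle \tilde\phi_{v_1},g\rangle\bigr|\le 2\,\|g-g_{2m-1}\|_\infty.
\end{equation*}
At this point I would invoke Theorem~\ref{thm:trf}, which under the stated analyticity hypotheses on $g$ gives $\|g-g_{2m-1}\|_\infty\le 2M(\rho)\rho^{-(2m-1)}/(\rho-1)$ for any admissible $\rho>1$. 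Combining and rewriting $\rho/(\rho-1)$ in the form $\rho^2/(\rho^2-1)\cdot(1+1/\rho)$ (or bounding the geometric tail $\sum_{k\ge 2m}\rho^{-k}$ directly in the form $\rho^{-2m+2}/(\rho^2-1)$) delivers the advertised estimate \nref{eq:lanczosb}.

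I expect no conceptual obstacle; the two points that require a little care are (i) confirming that $\tilde\phi_{v_1}$ is genuinely a probability measure so that the sup-norm bound applies with a factor of $2$ rather than something larger, and (ii) extracting the constant $4\rho^2/(\rho^2-1)$ from the raw geometric series $\sum_{k\ge 2m}|\gamma_k|\le\sum_{k\ge 2m}2M(\rho)\rho^{-k}$ in the cleanest way, which is essentially a bookkeeping step using $1/(1-\rho^{-2})=\rho^2/(\rho^2-1)$ after grouping the tail appropriately.
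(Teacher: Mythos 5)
Your proposal is correct and follows essentially the same route as the paper's proof: insert the degree-$(2m-1)$ truncated Chebyshev expansion of $g$, use exactness of the $m$-point Gauss rule on that polynomial, bound the two resulting tail terms by the coefficient decay $|\gamma_k|\le 2M(\rho)\rho^{-k}$ together with the fact that both $\phi$ and $\tilde\phi_{v_1}$ have total mass one, and sum the geometric series. Even your final bookkeeping of the constant (writing the tail sum as $\rho^{-2m}\rho^2/(\rho^2-1)$) mirrors the paper's own computation, so nothing further is needed.
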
  
\begin{proof}
Let $p_{2m-1}$ be the Chebyshev polynomial approximation of degree  $2m-1$  to $g(t)$:
\[
p_{2m-1}(t) = \sum_{k=0}^{2m-1}\gamma_kT_k(t)\approx g(t) = \sum_{k=0}^{\infty}\gamma_kT_k(t).
\]
Since the quadrature formula (\ref{eq:GW}) is exact for  polynomials with degree up to $2m-1$, we have
\[
\int_{a}^{b} p_{2m-1}(t) d\mu(t) = \sum_{i=1}^m a_i p_{2m-1}(\theta_i). 
\]
Therefore, we get
\begin{align*}
|\left\langle \phi, g \right\rangle - \left\langle \tilde{\phi}_{v_1}, g \right\rangle| &= |\frac{1}{n}\sum_{i=1}^{n}g(\lambda_i) - \sum_{j=1}^{m}a_jg(\theta_j)|= | \int_{a}^{b} g(t) d\mu(t) - \sum_{j=1}^{m}a_jg(\theta_j)|\\
&\leq | \int_{a}^{b} g(t)- p_{2m-1}(t) d\mu(t)|+| \int_{a}^{b} p_{2m-1}(t)d\mu(t)-\sum_{j=1}^{m}a_jg(\theta_j)|\\
& \leq \int_{a}^{b} |g(t)- p_{2m-1}(t)| d\mu(t)+\sum_{j=1}^{m}a_j|p_{2m-1}(\theta_j)-g(\theta_j)|\\
&\leq  \sum_{k=2m}^{\infty} \int_{a}^{b} |\gamma_k| |T_k(t)| d\mu(t) + \sum_{j=1}^{m}a_j\sum_{k=2m}^{\infty}|\gamma_k||T_k(\theta_j)|.
\end{align*}
Based on (\ref{eq:trf4}), we know that 
\begin{align*}
\sum_{k=2m}^{\infty}|\gamma_k||T_k(\theta_j)| \leq \sum_{k=2m}^{\infty} 2 M(\rho) \rho^{-k}|T_k(\theta_j)|\leq \sum_{k=2m}^{\infty} 2 M(\rho) \rho^{-k}.
\end{align*}
Since $\sum_{j=1}^{m} a_{j} = \int_{a}^{b} d\mu(t) = (v_{1},v_{1}) = 1$, we have 
\begin{equation}\label{eq:bd1}
\sum_{j=1}^{m}a_j\sum_{k=2m}^{\infty}|\gamma_k||T_k(\theta_j)| \leq \frac{2\rho^2M(\rho)}{(\rho^2-1)\rho^{2m}}\sum_{j=1}^{m} a_{j}= \frac{2\rho^2M(\rho)}{(\rho^2-1)\rho^{2m}}.
\end{equation}

For the first term, we have 
\[
\int_{a}^{b} |T_k(t)| d\mu(t) =  \frac{1}{n} 
\sum_j |T_k(\lambda_j)| \le 1 ,
\]
and therefore, 
\begin{equation}\label{eq:bd2}
\sum_{k=2m}^{\infty} \int_{a}^{b} |\gamma_k| |T_k(t)| d\mu(t)   \leq  \sum_{k=2m}^{\infty} 2 M(\rho) \rho^{-k} \leq \frac{2\rho^2M(\rho)}{(\rho^2-1)\rho^{2m}}.
\end{equation}
Adding the bounds in (\ref{eq:bd1}) and (\ref{eq:bd2}), we obtain  (\ref{eq:lanczosb}).
\end{proof}

Theorem \ref{thm:errorLanczos} indicates that the approximation error from the Lanczos method for computing the DOS decays as $\rho^{-2m}$, which is twice as fast as the KPM method with degree $m$.

The approximation in (\ref{eq:LanQuadApp}) is achieved by taking an
idealistic vector $v_1$ that has equal weights ($\pm 1/\sqrt{n}$)  in all eigenvectors
in its representation in the eigenbasis. 
A common strategy to mimic the effect of having a vector with 
$\mu_i=1/\sqrt{n},\forall i$,  is to use $s$  random 
vectors $v_1\up{k}$, called sample vectors, and average the results of the 
above formula over them:
\eq{eq:LanDos}
\phi 
\approx 
\frac{1}{s} \sum_{k=1}^s 
\sum_{i=1}^m a_i\up{k} \delta(t -{\theta_i\up{k}}) .
\en
Here the superscript $(k)$ relates to the $k$-th sample vector and 
$\theta_i\up{k}$, $ a_i\up{k}$ are the nodes and  weights of the
quadrature formula shown in \nref{eq:GW} for this sample vector. 

\subsection{Generalized problems} 
A straightforward way to deal with the generalized case is to apply 
the standard Lanczos algorithm (Algorithm 2)
described in the previous section 
to the matrix $S \inv A S\inv $ (or $L\inv A L^{-T}$). This leads to the relation:
\begin{equation}\label{eq:LanStd} 
S\inv A S\inv V_m = V_m T_m + \beta_{m+1} v_{m+1} e_m \trans. 
\end{equation}
If we set  $W_m=S\inv V_m$, and multiply through by $S\inv$, then we get
\begin{equation} \label{eq:LanB} 
B\inv A W_m = W_m T_m + \beta_{m+1} w_{m+1} e_m \trans,
\end{equation}
where it is important to note that $W_m$ is $B$-orthogonal since 
\[
W_m \trans B W_m = V_m \trans S\inv B S\inv V_m = V_m \trans V_m = I.
\]
It  is possible  to  generate a  basis $V_m$  of  the Krylov  subspace
$K_m(v_1,S\inv A S\inv)$ if we want  to deal with the standard problem
with $S\inv A S\inv$. It is  also possible to generate the basis $W_m$
of the Krylov subspace $K_m(w_1,B\inv A)$  directly if we want to deal
with  the  standard  problem  with   $B\inv  A$  using  the  $B$-inner
product. From our  discussion at the end  of Section \ref{sec:kpmgen},
we know that the second case is computationally more efficient.



Now let us focus on the case (\ref{eq:LanB}). If we
start the Lanczos algorithm with a vector $w_1$ where 
$\norm{w_1}_B=1$, we could generate the sequence $w_i$ through Algorithm 3, which is described as
Algorithm 9.2 in \cite[p.230]{Saad2011}.
\begin{algorithm}[H]
\begin{algorithmic}[1]  \label{alg:LanAB}
\caption{Lanczos algorithm for matrix pair $(A,B)$}
\STATE Choose an initial vector $w_1$ with $\norm{w_1}_B=1$. Set $\beta_1=0$, $w_0=0$, $z_0=0$, 
       and compute $z_1=Bw_1$
\FOR {$j=1,2,\ldots,m$}
\STATE $z:=Aw_j-\beta_j z_{j-1}$
\STATE $\alpha_j=(z,w_j)$
\STATE $z:=z-\alpha_j z_j$
\STATE Full reorthogonalization: $z:=z-\sum_i(z,w_i)z_i$ for $i\le j$
\STATE $w:=B\inv z$
\STATE $\beta_{j+1}=\sqrt{(w,z)}$
\STATE If $\beta_{j+1}==0$ restart or exit 
\STATE $w_{j+1}:= w / \beta_{j+1}$
\STATE $z_{j+1}:= z / \beta_{j+1}$
\ENDFOR
\end{algorithmic}
\end{algorithm}

It is easy to show that if we set $v_i = S w_i$, then 
the $v_i$'s are orthogonal to each other and that
they are identical with the sequence of $v_i$'s 
that would be obtained from the standard Lanczos algorithm applied to
$S\inv A S\inv $ (or $L^{-1} A L^{-T}$) starting with $v_1 = S w_1$ (or $v_1 = L^{-T} w_1$). The two algorithms are equivalent and going from one to
other requires a simple transformation.

The  3-term recurrence now becomes
\begin{equation}
\beta_{m+1} w_{m+1} = \hat w_{m+1} = B\inv A w_m -  \alpha_m w_m - \beta_m w_{m-1},   
\end{equation}
and $\beta_{m+1}=(B \hat w_{m+1}, \hat w_{m+1})^{1/2}$. 
Note that the algorithm requires that we  save the auxiliary
 sequence $z_j \equiv B w_j$ in order to avoid additional computations
with $B$ to calculate $B$-inner products.

On the surface the extension seems trivial: we could
take a sequence of random vectors $w_1\up{k}$ and compute an
average  analogue to \nref{eq:LanDos} over these vectors.
There is  a problem in the selection of the initial 
vectors. We can reason with respect to the original algorithm applied to
$S\inv A S\inv$. 
If we take a random vector $v_1\up{k} $ and run Algorithm 2 with this as a starting vector, 
 we would compute the exact same tridiagonal matrix
$T_m\up{k}$ as if we used  Algorithm 3 with
$w_1\up{k} = S\inv v_1\up{k}$. Using  the same
average \nref{eq:LanDos} appears therefore perfectly valid since
the corresponding $\theta_i\up{k}$ and $a_i\up{k}$ are the same.  
The catch is in the way we select the initial vectors $w_1\up{k}$.
Indeed, it is not enough to select random vectors $w_1\up{k}$ 
 with mean zero and variance $1/n$, \emph{it is the associated
$v_1\up{k}$ that should have this property.} 
Selecting $w_1\up{k} $ to be of mean zero and variance
$1/n$, 
will not work, since the corresponding $v_1\up{k} \equiv S w_1\up{k}$ 
will have mean zero but not the right variance. 

The only modification that is implied by this observation is that
we will need to modify the initial step of Algorithm 3
as follows:

\medskip  
1.\  \parbox[t]{0.91\textwidth}
{Choose $v_1$ with components $ \eta_i \in {\cal N}(0,1)$ and let
$w_1 = S^{-1} v_1$ (or $w_1 = L^{-T} v_1$); $z_1 = B w_1$. Compute $t = \sqrt{ (w_1, z_1)}$
and $z_1:=z_1/t; w:=w_1/t$. Set $\beta_1= 0; z_0 =w_0 = 0$.} 
 
%
%

\section{Numerical Experiments} \label{sec:numerical}  In this section
we  illustrate the  performance of  the  KPM and  Lanczos methods  for
computing the DOS for generalized eigenvalue problems. Both algorithms
have been implemented in MATLAB and all the experiments were performed
on a Macbook Pro with Intel i7 CPU processor and 8 GB memory.

In  order  to  compare  with  the  accuracy  of  the  DOS,  the  exact
eigenvalues of each problem are computed with MATLAB built-in function
\textbf{eig}. We  measure the error  of the approximate DOS  using the
relative $L_1$ error as proposed in \cite{Lin2017}:
\begin{equation}
\text{ERROR} = \frac{\sum_i|\tilde{\phi}_\sigma(t_i)-\phi_\sigma(t_i)|}{\sum_i|\phi_\sigma(t_i)|},
\end{equation}
where $\{t_i\}$ are a set of uniformly distributed points and $\phi_\sigma(\cdot)$ and $\tilde{\phi}_\sigma(\cdot)$ are the smoothed (or regularized) DOS with $\delta(t)$ replaced by $\frac{1}{\sqrt{2\pi}\sigma}e^{\frac{-t^2}{2\sigma^2}}$. A heuristic criterion to select $\sigma$ as suggested in \cite{DOSpaper16} is to set
\begin{equation}
\sigma = \frac{\lambda_{max}-\lambda_{min}}{60\sqrt{2\log (1.25)}},
\end{equation} 
where $\lambda_{max}$ and $\lambda_{min}$ are the largest and smallest eigenvalues of the matrix pencil $(A,B)$. 

\subsection{An example from the earth's normal mode simulation}
The first example  is from the study of the  earth's normal modes with
constant solid materials. The stiffness matrix $A$ and mass matrix $B$
 result  from the continuous  Galerkin finite element  method and
have size  of $n = 3,657$.  Details about the  physical model and
the discretization techniques used can be found in \cite{shigmigreport2016note,normalmodes}.

The  numbers of  nonzero entries  are $A$  and $B$  are $145,899$  and
$48,633$, respectively. The eigenvalues of the pencil are ranging from
$\lambda_{min}  =   -2.7395\!\times\!10^{-13}$  to   $\lambda_{max}  =
0.0325$. Fig.~\ref{fig:nm} displays the  sparsity patterns of $A,B$ as
well as the histogram of the eigenvalues of $(A,B)$.

\begin{figure}[htb] 
\begin{tabular}
[c]{ccc}%
\includegraphics[width=0.29\textwidth]{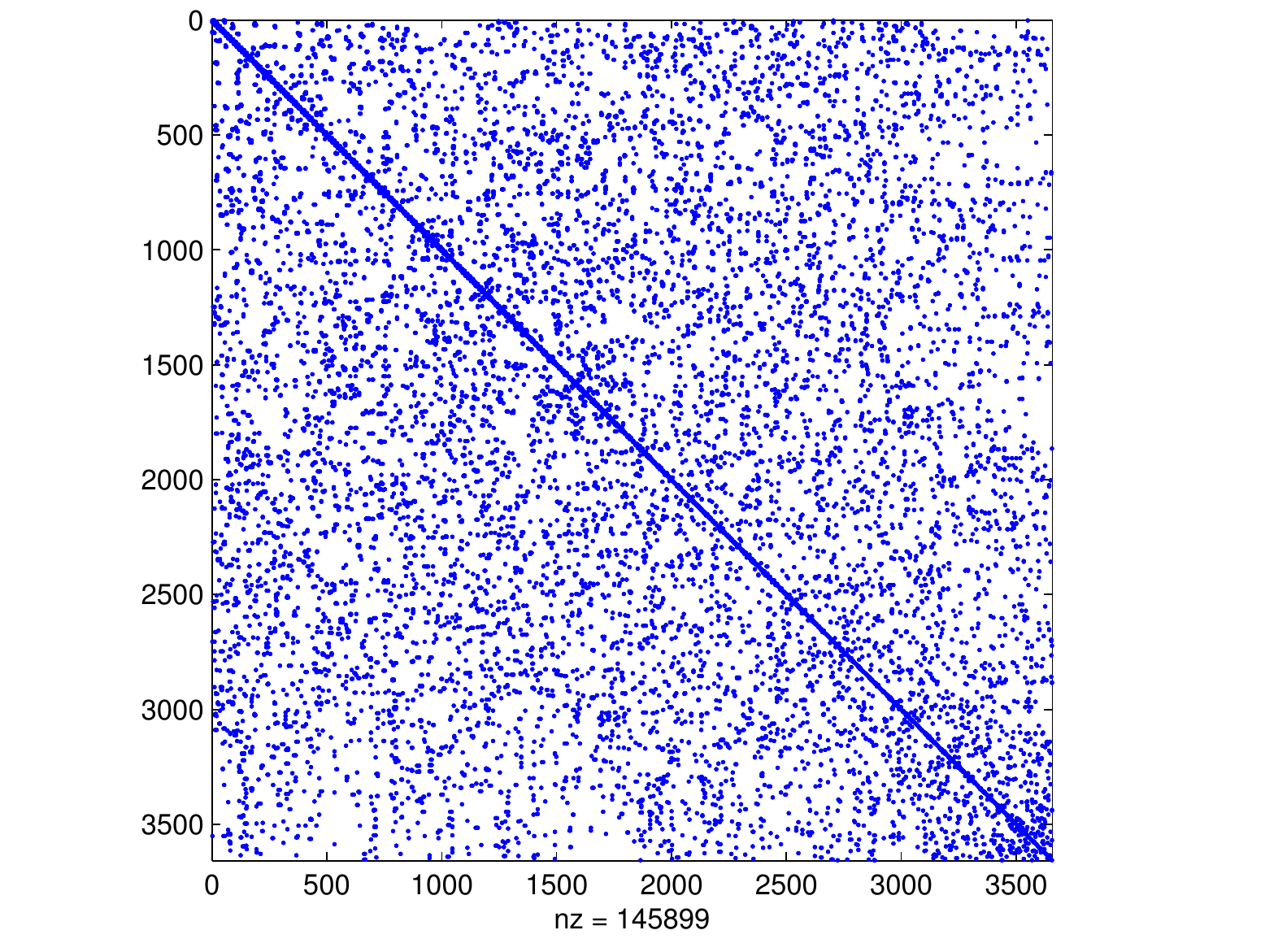}&
\includegraphics[width=0.29\textwidth]{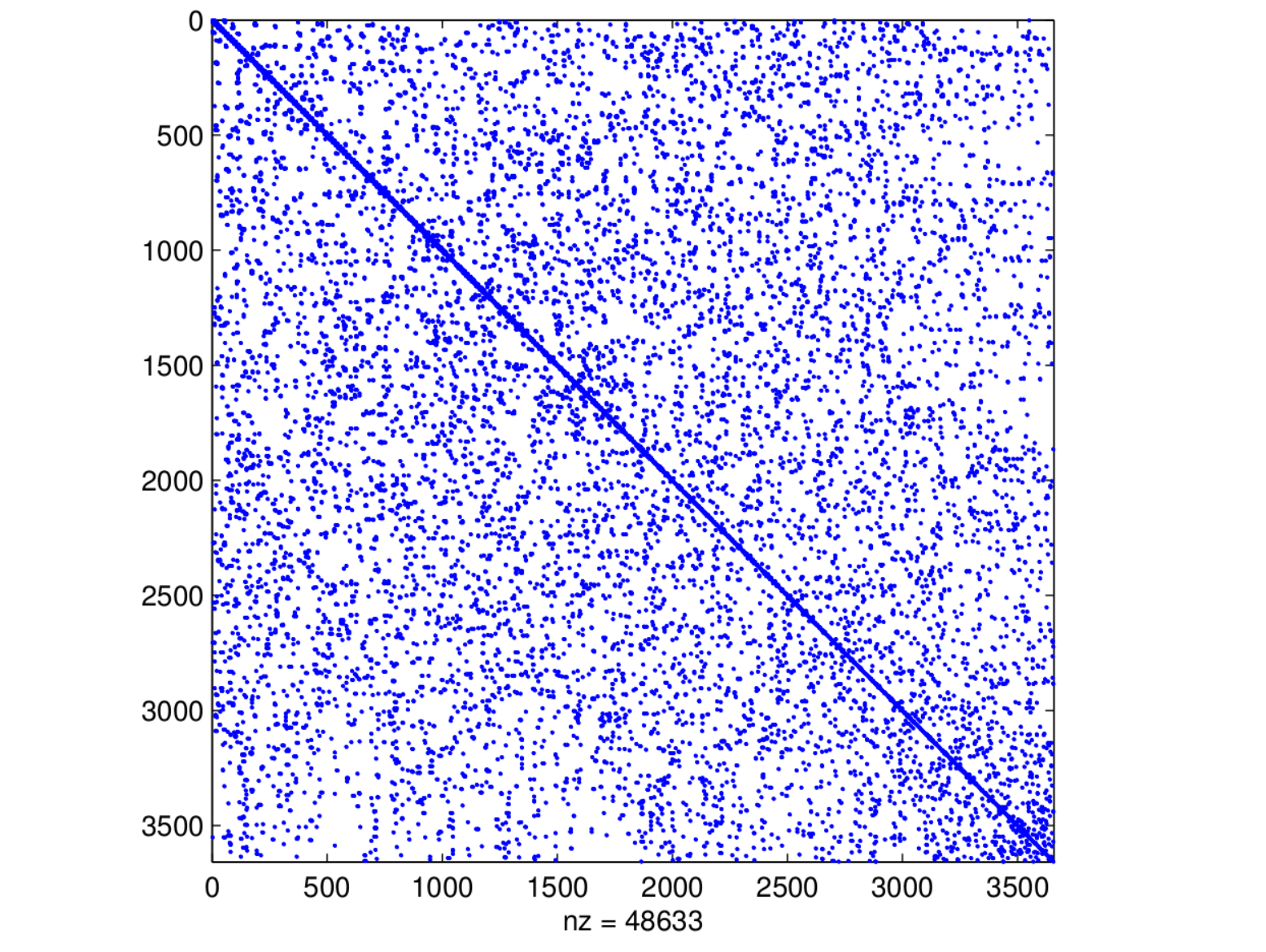}&
\includegraphics[width=0.29\textwidth]{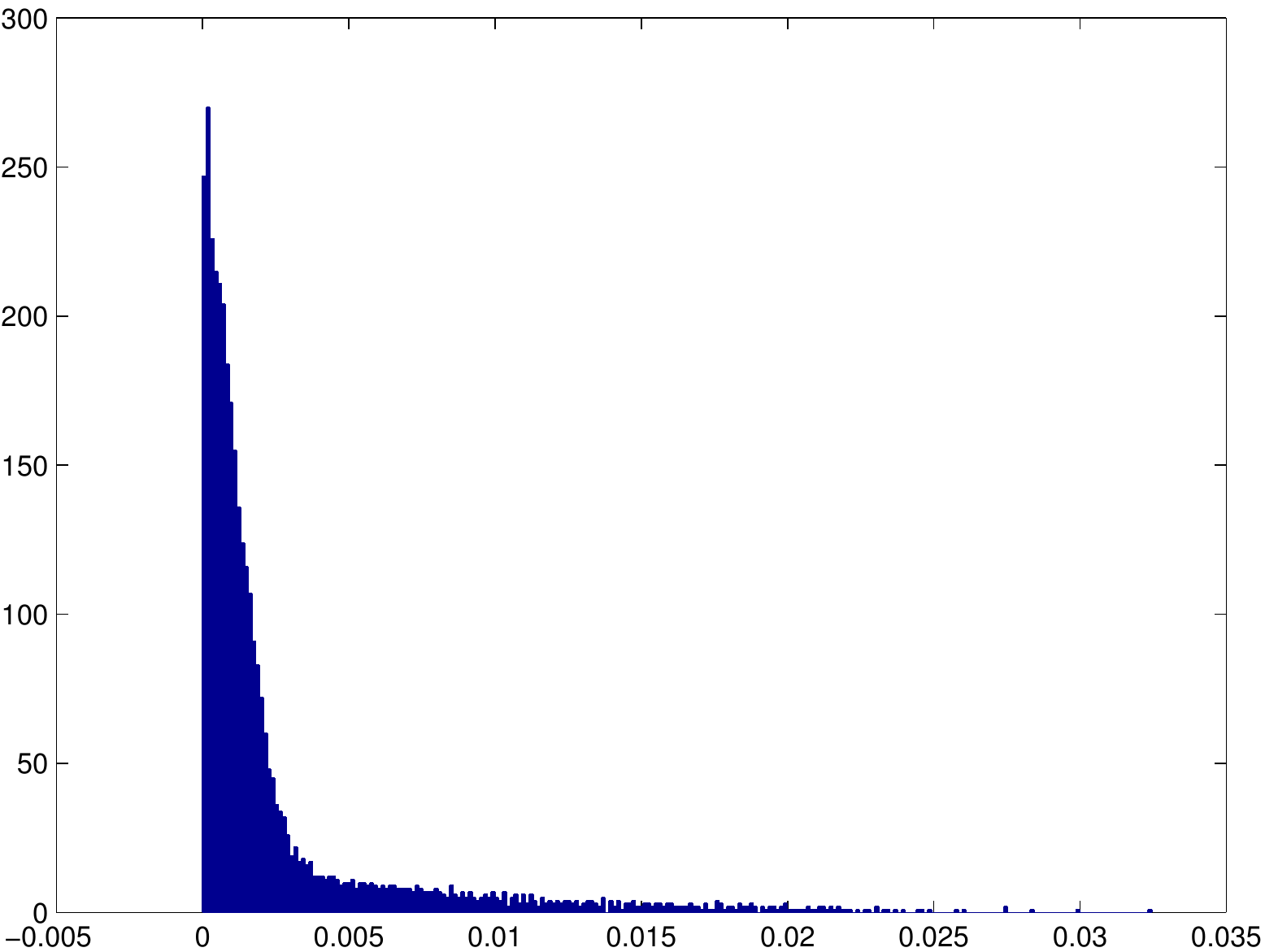}\\{\small (i) Sparsity of $A$} & {\small (ii) Sparsity of $B$} & {\small (iii) Histogram of eigenvalues}\\
\end{tabular}
\caption{For the earth's normal mode matrix pencil, the sparsity pattern of $A$, $B$ and the histogram of the eigenvalues of $(A,B)$ with $250$ bins.}
\label{fig:nm}
\end{figure}

In Fig.~\ref{fig:lanvskpm}, we first  compare the computed accuracy of
the KPM  with that  of the  Lanczos method when  the number  of random
vector $n_{nev}$ was fixed at  $50$. The Cholesky factorization of $B$
was used  for operations  involving $B$. We  observe that  the Lanczos
method outperforms the KPM when $m$  varies from $20$ to $60$. This is
because the  eigenvalues of  this pencil are  clustered near  the left
endpoint of  the spectrum  (See Fig.~\ref{fig:nm}  (iii)) and  the KPM
method has a  hard time capturing this cluster (See Fig.~\ref{fig:lanvskpm2}).

\begin{figure}[htb] 
\centerline{\includegraphics[width=0.6\textwidth]{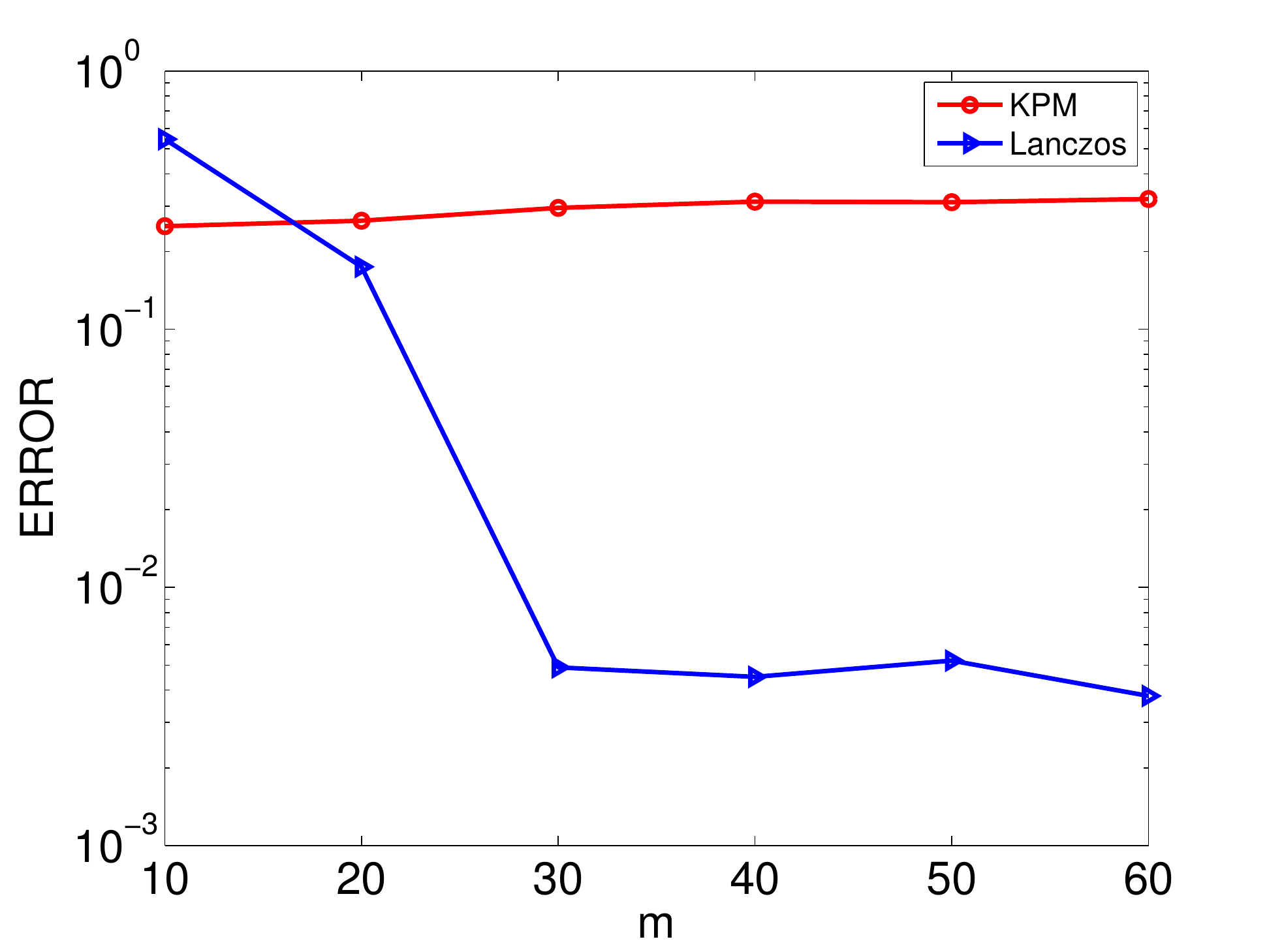}}
\caption{A comparison of  approximation errors of the  KPM and Lanczos
  method  applied  to  the  earth's  normal  mode  matrix  pencil  for
  different $m$  values.  The Cholesky factorization  is performed for
  operations involving $B$ and $n_{nev}$ is fixed at $50$.}
\label{fig:lanvskpm}
\end{figure}

\begin{figure}[htb] 
\includegraphics[width=0.49\textwidth]{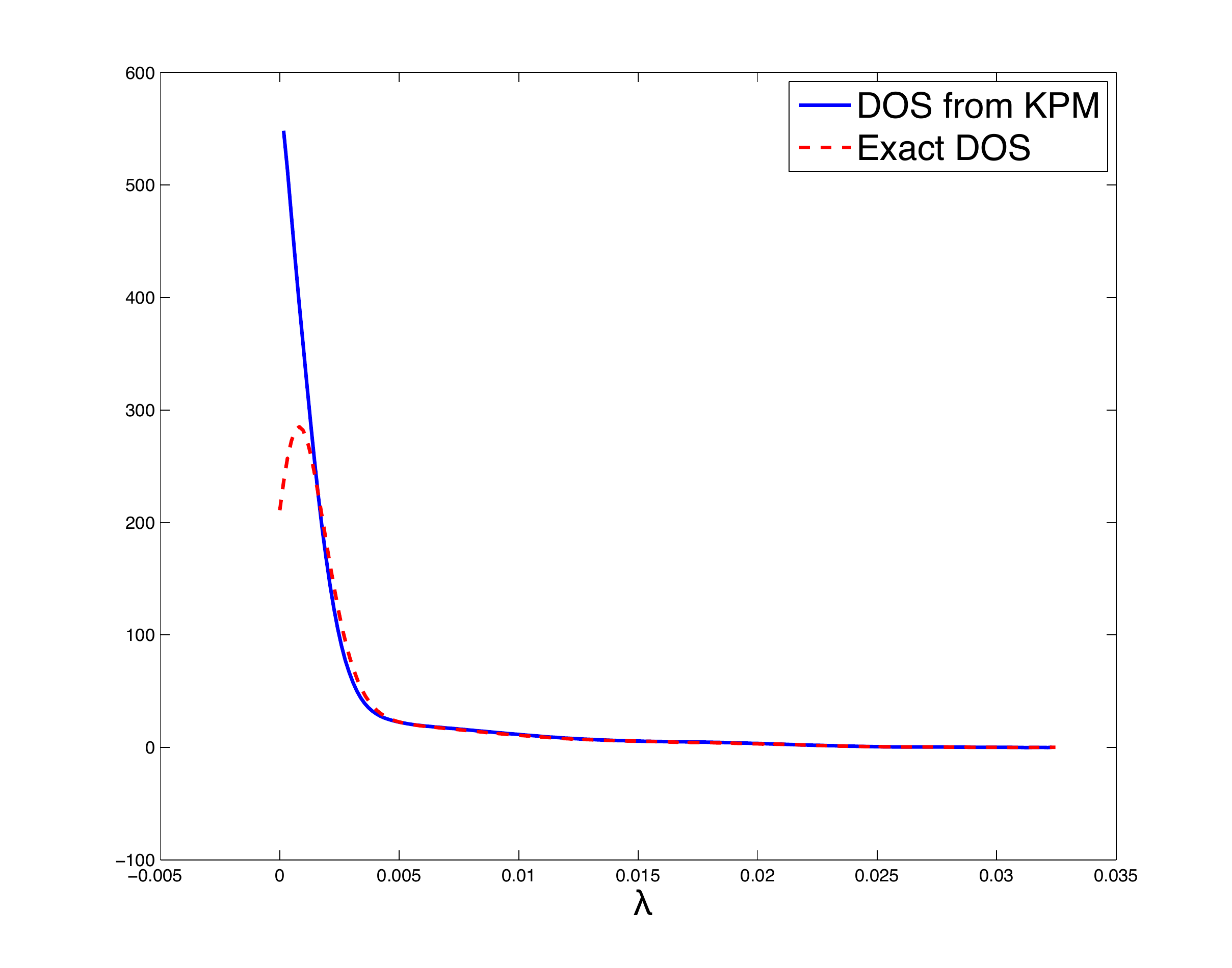}
\includegraphics[width=0.49\textwidth]{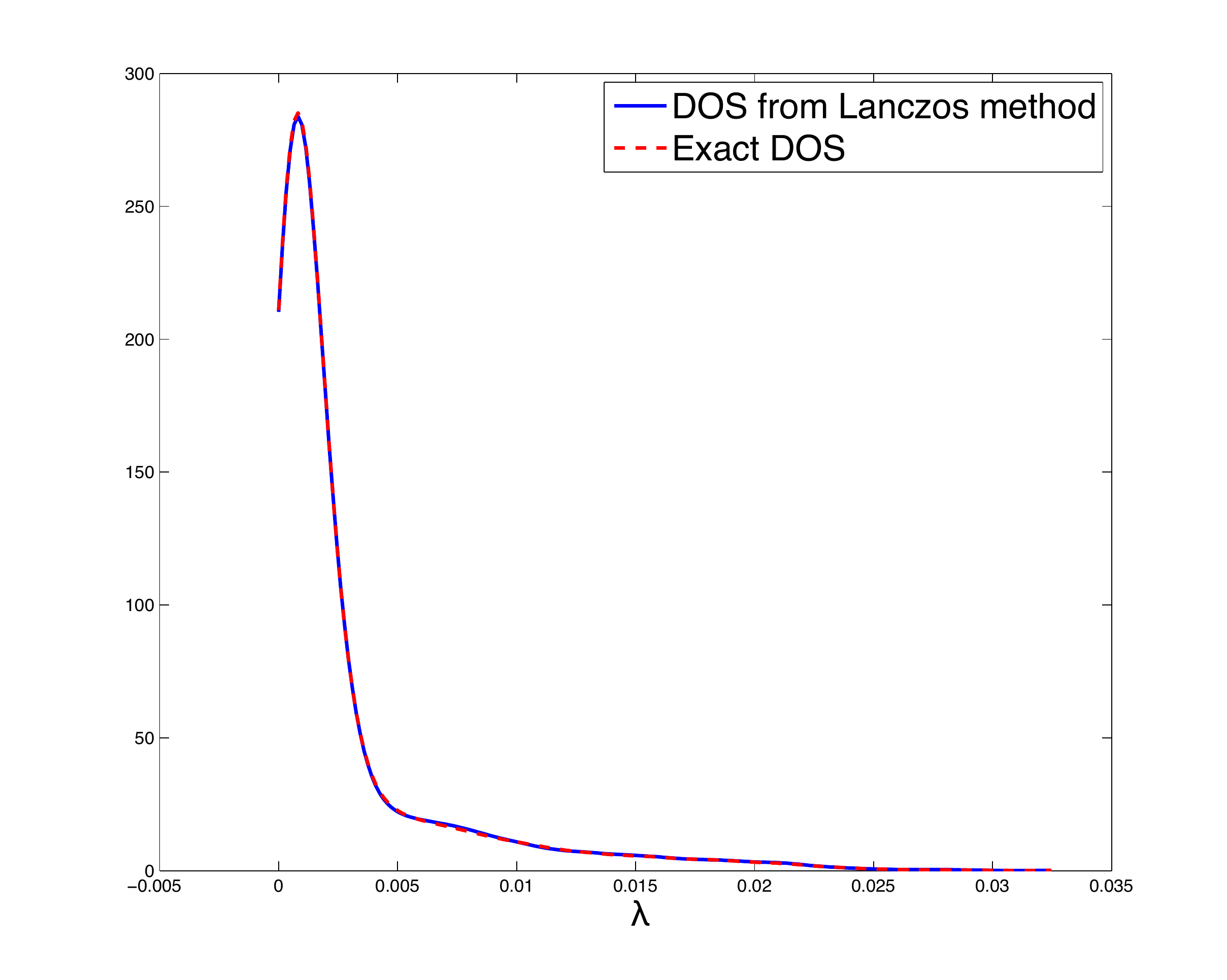}
\caption{For the earth's normal mode matrix pencil, the computed DOS by the KPM (left) 
and the Lanczos method (right) when $m=30$ and $n_{nev}=50$, compared to the exact DOS.
Cholesky factorization is performed for operations involving $B$.}
\label{fig:lanvskpm2}
\end{figure}

Fig.~\ref{fig:lanerror} shows the error of the Lanczos method with an increasing number of random vectors $n_{nev}$ and fixed $m=30$. It indicates that a large number of $n_{nev}$ helps reduce the error through the randomization. 

\begin{figure}[htb] 
\centerline{\includegraphics[width=0.6\textwidth]{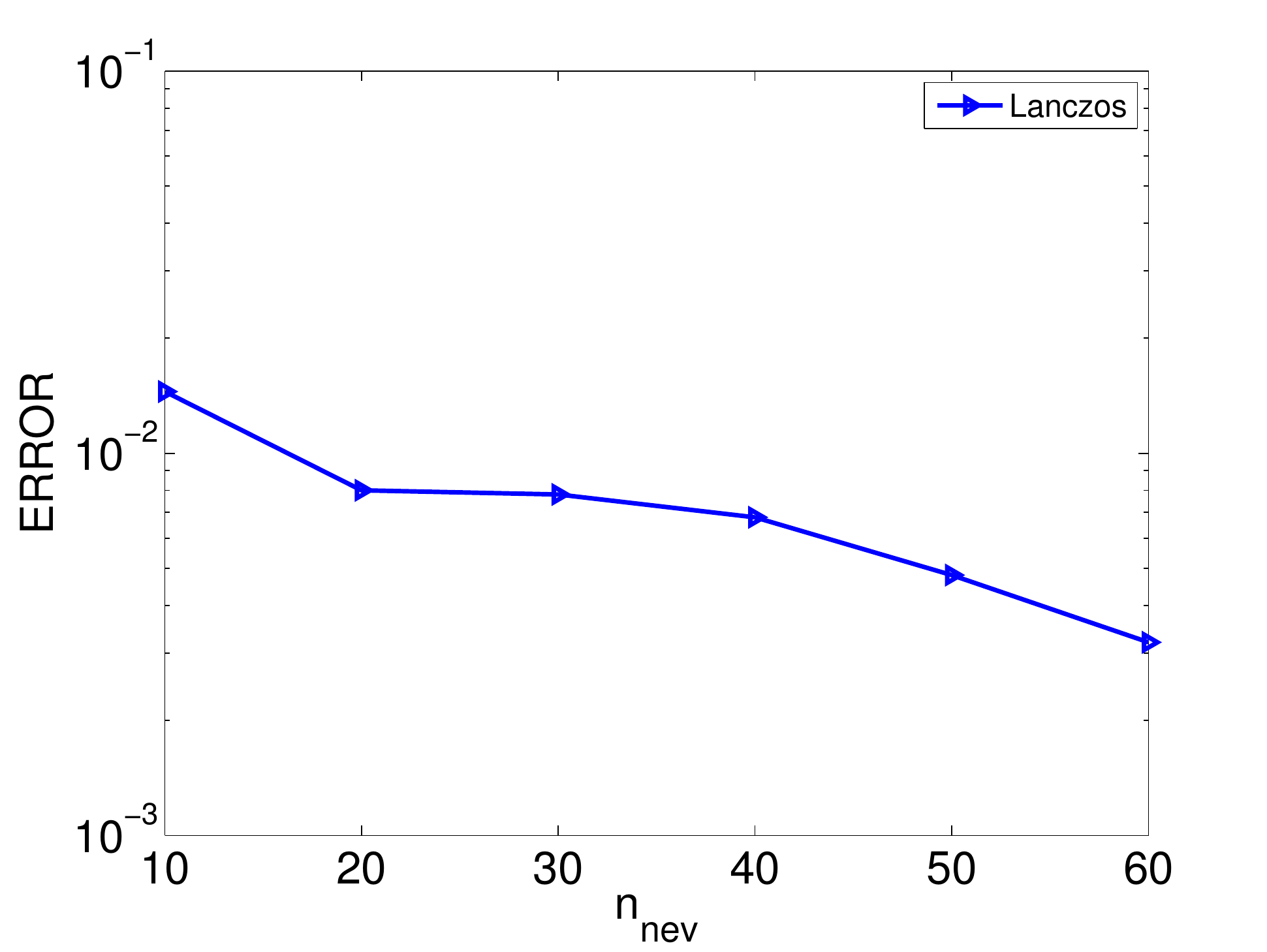}}
\caption{For the earth's normal mode pencil, the error of the Lanczos method with respect to an increasing number of random vectors $n_{nev}$ when $m=30$. Cholesky factorization is performed for operations involving $B$.}
\label{fig:lanerror}
\end{figure}

Then we consider replacing the Cholesky factorization of $B$ with Chebyshev polynomial approximations $g_{k_1}(B)$ and $q_{k_2}(B)$ as proposed in Section \ref{sec:funapp}. One way to determine the degree of $g_{k_1}$ (or $g_{k_2}$) is to use the theoretical result of Theorem~\ref{thm:Main}. However, the theorem has a 
parameter $\rho$ which is free and the selection of optimal $\rho$ may be harder
than the selection of $k_i$ by simpler means. Since $g(t)$, $q(t)$ and their approximations are smooth and a simple heuristic is to select 
$k_i$ to be the smallest number for which the \emph{computed} 
$\| (g - g_{k_1})/g \|_\infty $ and $\| (q - q_{k_2})/q \|_\infty $ are small enough. To evaluate the norm we can 
discretize the interval under consideration very finely (higher degrees
will require more points). This will yield an estimate rather than an exact norm and this is enough for practical purposes. 

For the original matrix pencil $(A,B)$, the eigenvalues of $B$ are inside $[3.80e\!+\!07, 1.46e\!+\!10]$ and $\kappa(B)= 382.91$. In this case, we can estimate the convergence based on $ \rho_1 = [\sqrt{\kappa +1 } + \sqrt{2}]/[\sqrt{\kappa -1 }]= 1.0750$. Since $\rho_1$ is close to $1$, one should expect a slow convergence for $g_{k_1}(B)$ (or $q_{k_2}(B)$) to $B^{-1}$ (or $B^{-1/2}$). In Table \ref{tab:errSqrt}, we report the computed norms $\| (g - g_{k})/g \|_\infty$ and $\| (q - q_{k})/q \|_\infty $ when $k$ increases from $30$ to $60$. As we can see, the error associated with $g_{k}$ is larger than $10^{\!-\!3}$ even when $k$ reaches $60$.  

\begin{table}[htb] 
\begin{center} 
\begin{tabular}{c|c|c} 
Degree $k$ & $\| (g - g_{k})/g  \|_\infty$ & $\| (q - q_{k})/q  \|_\infty $\\ \hline 
30 &  $8.62\!\times\! 10^{-1}$ &  $1.92\!\times\! 10^{-2}$\\
40 &  $3.10\!\times\! 10^{-1}$ &  $6.00\!\times\! 10^{-3}$ \\
50 &  $1.12\!\times\! 10^{-1}$ &  $2.00\!\times\! 10^{-3}$\\
60 &  $4.01\!\times\! 10^{-2}$ &  $6.45\!\times \!10^{-4}$ \\
\hline
\end{tabular}
\end{center} 
\caption{Computed error norms for the order $k$ Chebyshev polynomial approximations to $g\!=\!1/\lambda$ and $q\!=\!1/\sqrt{\lambda}$ on the interval $[3.8017\!\times\!10^{7},1.4557\!\times\!10^{10}]$, which contains the spectrum of the original mass matrix $B$.}\label{tab:errSqrt}
\end{table}

We then applied the diagonal scaling technique to the mass matrix $B$. The eigenvalues of $D^{-1/2}BD^{-1/2}$ are now inside $[0.5479, 2.500]$ and $\kappa(D^{-1/2}BD^{-1/2}) = 4.5629$. In this case, 
$\rho_1 = 1.9988$ and $g_{k_1}(B)$ and $q_{k_2}(B)$ converge much faster. This is confirmed in Table \ref{tab:errSqrt2} where the  error norms are smaller than $6\!\times\! 10^{\!-6\!}$ for both approximations when $k$ reaches 12.
 
\begin{table}[htb] 
\begin{center} 
\begin{tabular}{c|c|c} 
Degree $k$  & $\| (g - g_{k})/g  \|_\infty$ & $\| (q - q_{k})/q  \|_\infty $\\ \hline 
6 &  $2.60\!\times\! 10^{-2}$ &  $3.73\!\times\! 10^{-4}$\\
8 &  $3.36\!\times\! 10^{-4}$ &  $4.32\!\times\! 10^{-5}$\\
10 &  $4.42\!\times\! 10^{-5}$ &  $5.13\!\times\! 10^{-6}$ \\
12 &  $5.80\!\times\! 10^{-6}$ &  $6.19\!\times\! 10^{-7}$\\
\hline
\end{tabular}
\end{center} 
\caption{Computed error norms for the order $k$ Chebyshev polynomial approximations to $g\!=\!1/\lambda$ and $q\!=\!1/\sqrt{\lambda}$ on the interval $[0.5479, 2.500]$, which contains the spectrum of the mass matrix $B$ after diagonal scaling.}\label{tab:errSqrt2}
\end{table}

Fig.~\ref{fig:lan2} shows the error of the Lanczos method when the operations $B^{-1}v$ and $B^{-1/2}v$ are approximated by $g_{k_1}(B)v$ and $q_{k_2}(B)v$, respectively. The number of sample vectors $n_{nev}$ was fixed at $50$ and the degree $m$ was fixed at $30$. The degrees of $g_{k_1}$ and $q_{k_2}$ are determined to be the smallest integers for which the following inequalities hold
\begin{equation}
\| (g - g_{k_1})/g  \|_\infty\leq \tau,\quad 
\| (q - q_{k_2})/q  \|_\infty\leq \tau.
\label{eq:tau}
\end{equation}
Although the exact DOS curve is indistinguishable from those obtained from the Lanczos method, the error actually decreases as we reduce the value of $\tau$. The errors are $1.41\!\times\!10^{-2}$, $5.61\!\times\!10^{-3}$, $4.70\!\times\!10^{-3}$ and $4.30\!\times\!10^{-3}$ when $\tau = 10^{-1},10^{-2},10^{-3}$ and $10^{-4}$, respectively. In the following experiments, we will fix $\tau$ at $10^{-3}$ to select the degree for $g_{k_1}$ and $q_{k_2}$ based on (\ref{eq:tau}).

\begin{figure}[htb] 
\begin{tabular}
[c]{cc}%
\includegraphics[width=0.46\textwidth]{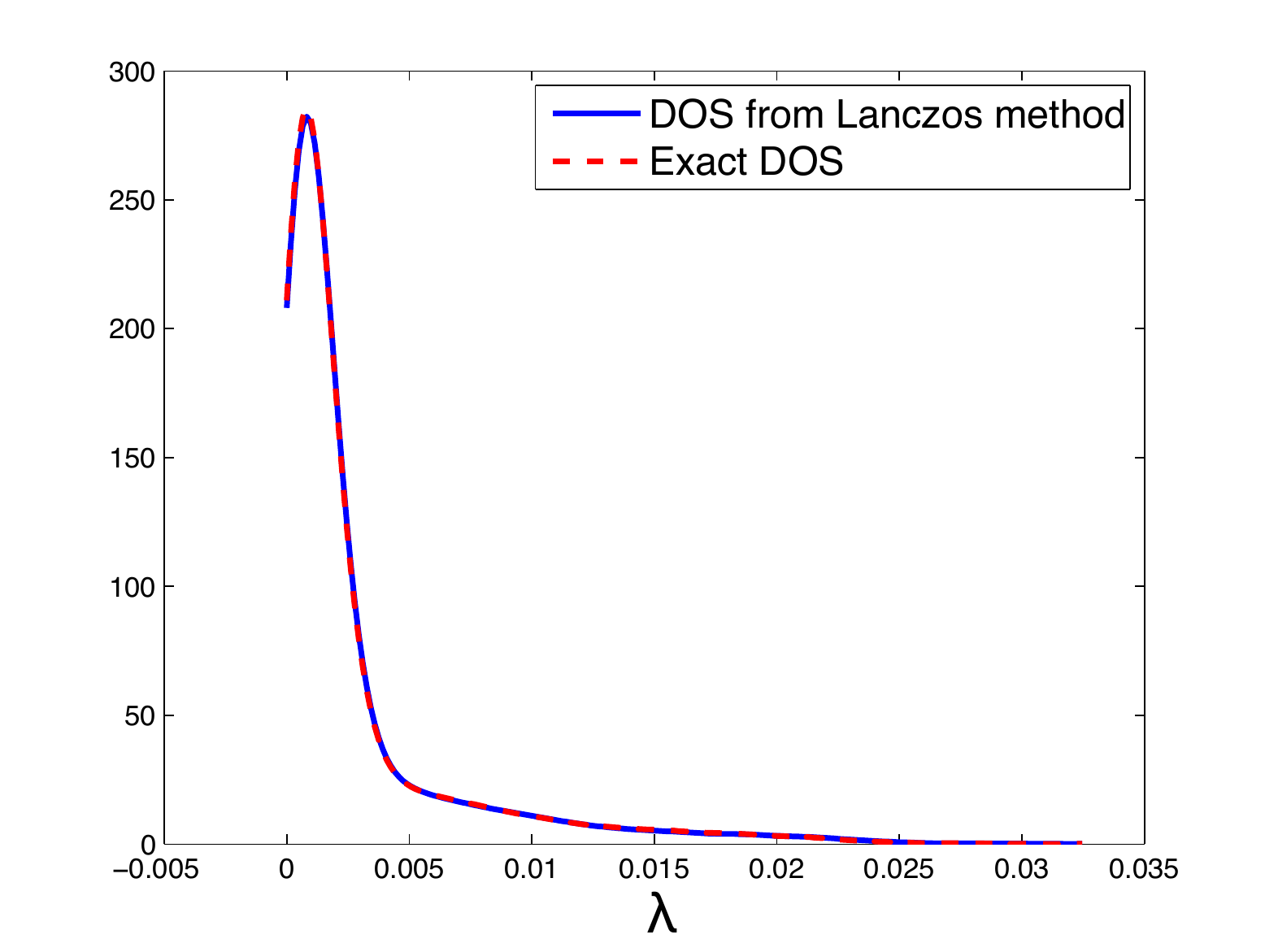}&
\includegraphics[width=0.46\textwidth]{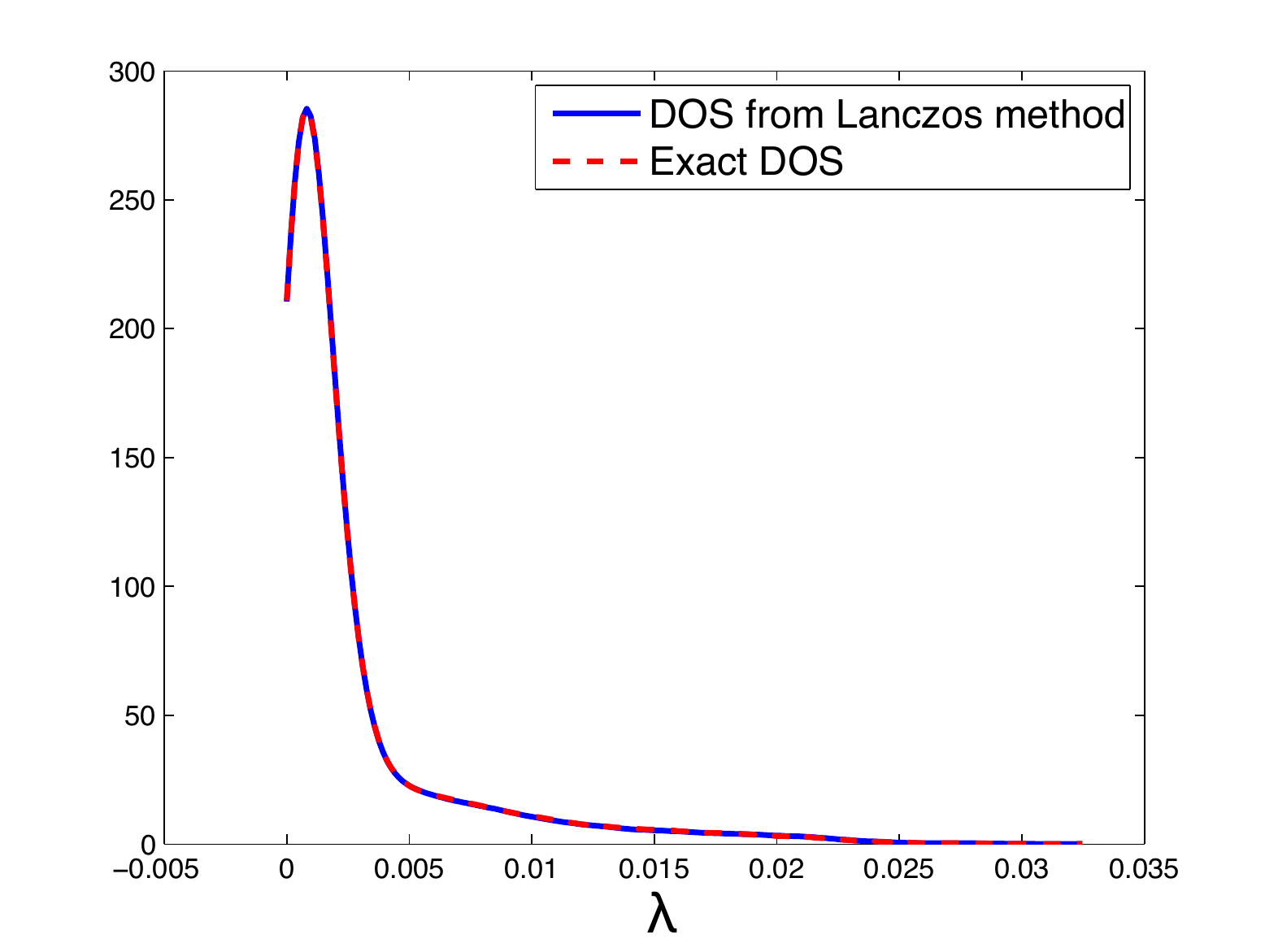}\\
{\small (i) $\tau=10^{-1}$} & {\small (ii) $\tau=10^{-2}$}\\
\includegraphics[width=0.46\textwidth]{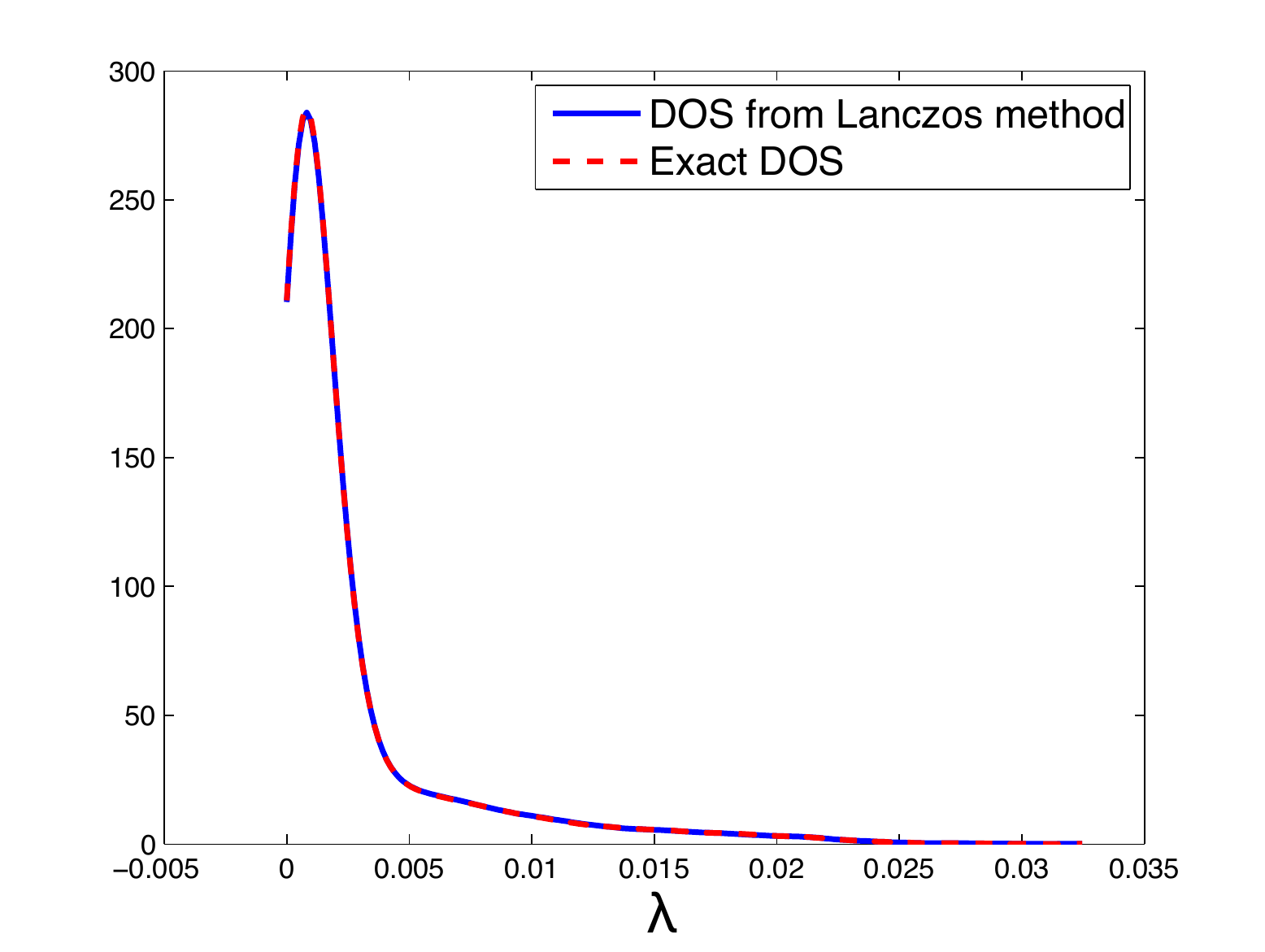}&
\includegraphics[width=0.46\textwidth]{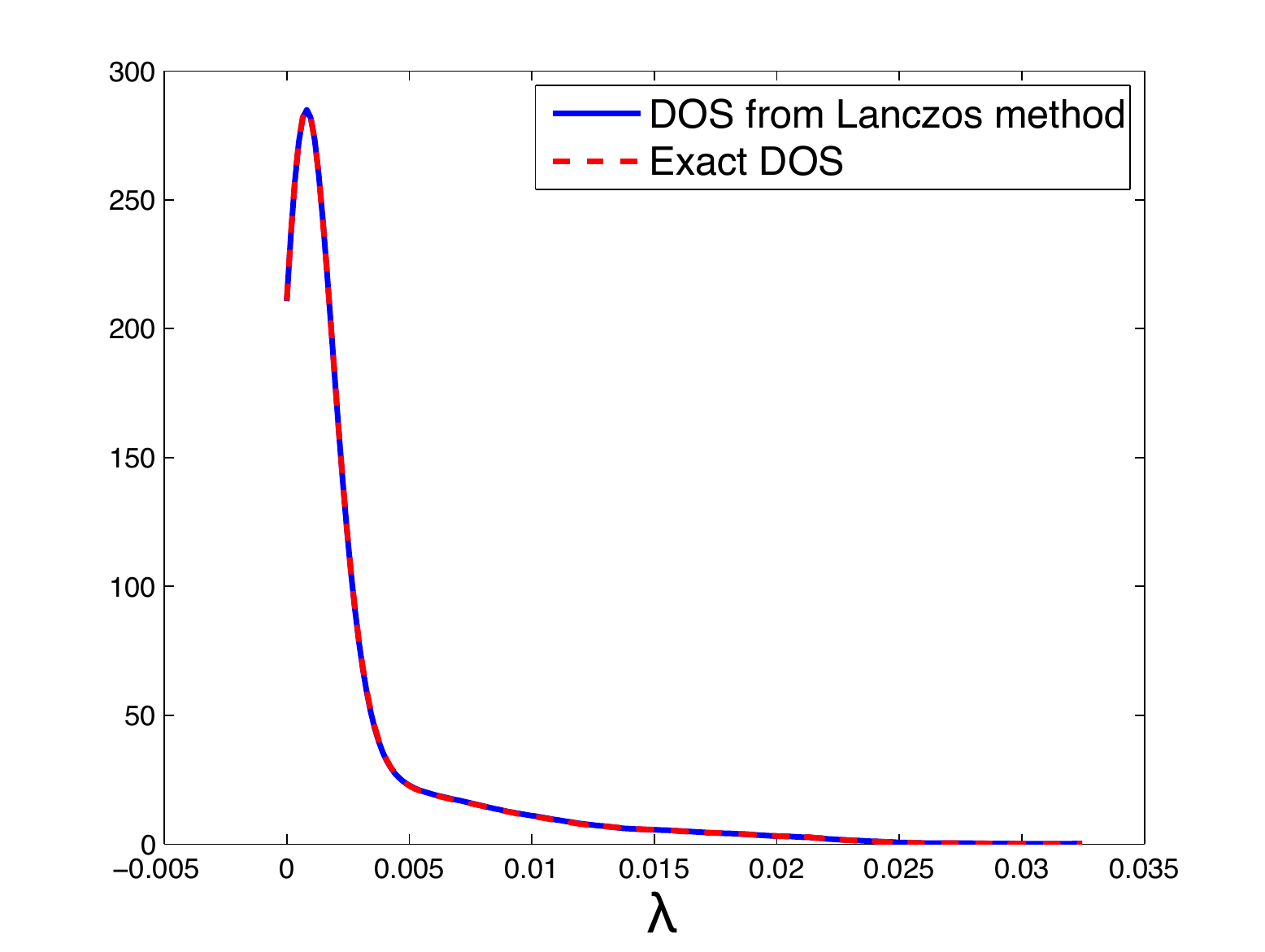}\\
{\small (iii) $\tau=10^{-3}$} & {\small (iv) $\tau=10^{-4}$}%
\end{tabular}
\caption{For the earth's normal mode matrix pencil, the computed DOS by the Lanczos method when $m = 30$ and $n_{nev} = 50$, compared to the exact DOS. The operations $B^{-1}v$ and $B^{-1/2}v$ are approximated by $g_{k_1}(B)v$ and $q_{k_2}(B)v$, respectively.  The degrees $k_1$ and $k_2$ are selected to be the smallest integers for which (\ref{eq:tau}) hold. The approximation errors are $1.41\!\times\!10^{-2}$, $5.61\!\times\!10^{-3}$, $4.70\!\times\!10^{-3}$ and $4.30\!\times\!10^{-3}$ when $\tau$ equals $10^{-1}$, $10^{-2}$, $10^{-3}$ and $10^{-4}$, respectively.}
\label{fig:lan2}
\end{figure}

\subsection{An example from a Tight-Binding calculation}
The second example is from  the Density Functional-based Tight Binding
(DFTB)  calculations (Downloaded from http://faculty.smu.edu/yzhou/data/matrices.htm). The  matrices  $A$  and   $B$  have  dimension
$n=17,493$. The matrix $A$ has  $3,927,777$ nonzero elements while $B$
has $3,926,405$  nonzero elements. The  eigenvalues of the  pencil are
ranging from $\lambda_{min} = -0.9138$ to $\lambda_{max} = 0.8238$.

Compared with  the earth's  normal mode matrix  pencil, both  $A,B$ in
this TFDB matrix pair are  much denser. Fig.~\ref{fig:pattern} displays the
sparsity patterns of $B$ and of its Cholesky factor, where $nz$ stands
for  the number  of  non-zeros. Even  with the  help  of AMD  ordering
\cite{AmestoyDavisDuff04},  the number  of non-zeros  in the  Cholesky
factor of  $B$ still reaches  $48,309,857$, which amounts  to having
$5.5233\!\times\!10^{3}$ non-zeros per row/column. This will cause two
issues.  First, a huge amount of memory may be needed to store the 
factors for a similar problem of
larger dimension.   Second,     applying  these   factors  is also   very
inefficient. These issues limit the  use of Cholesky factorization for
realistic large-scale calculations. On  the other hand, after diagonal
scaling  the  matrix  $B$  has eigenvalues  in  the  remarkably  tight
interval $[0.5756, \ 1.4432]$, which  allows a polynomial of degree as
low  as  $6$  for  $g_{k_1}$  and  $5$  for  $q_{k_2}$  when  $\tau  =
10^{-3}$. Thus,  we will  only test  the KPM  and Lanczos  method with
Chebyshev polynomial approximation techniques for this problem.

\begin{figure}[htb] 
\includegraphics[width=0.46\textwidth]{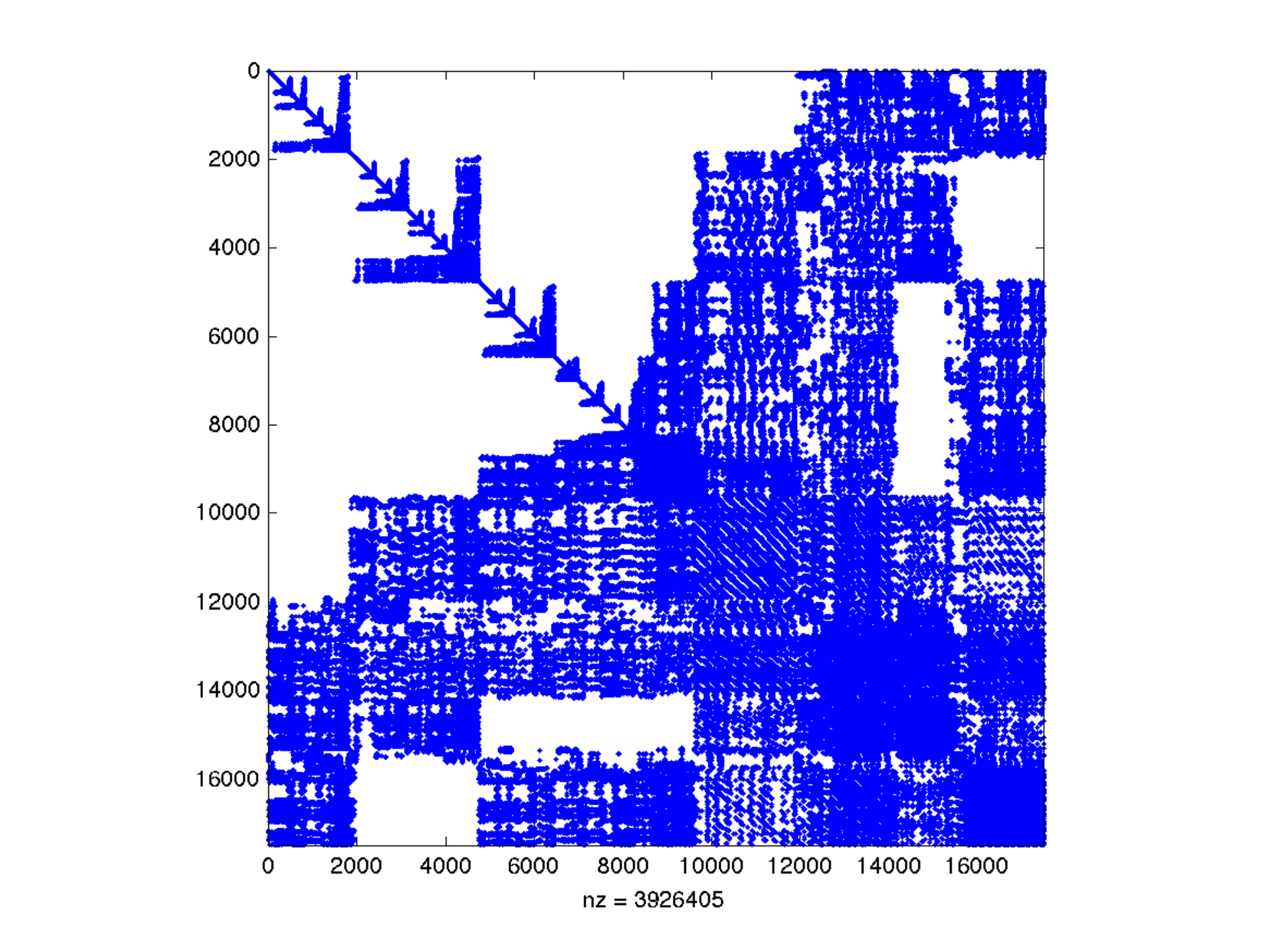}
\includegraphics[width=0.46\textwidth]{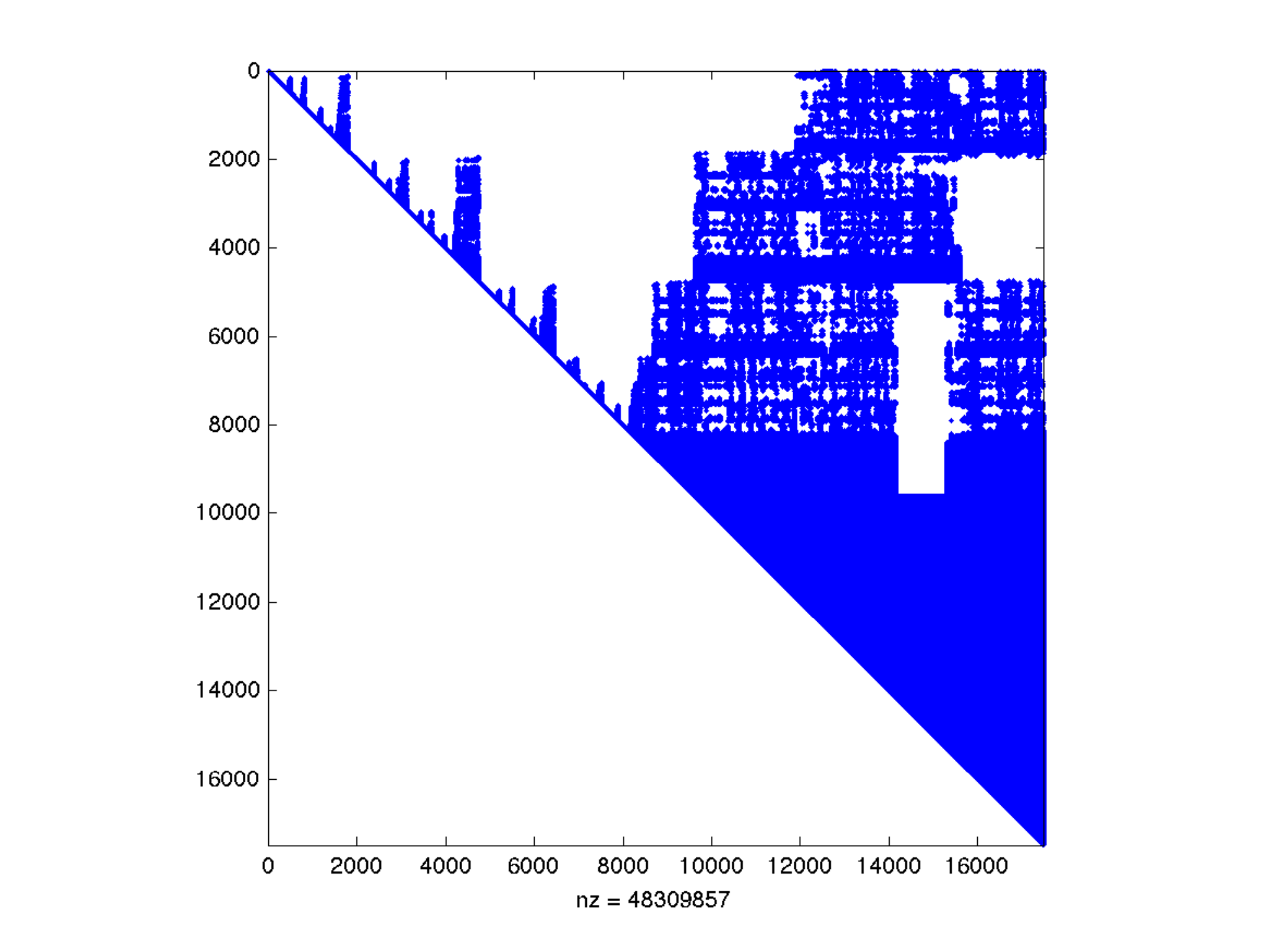}
\caption{The  sparsity  patterns of  the  matrix  $B$ (left)  and  its
  Cholesky factor (right) for the  TFDB matrix pencil. AMD ordering is
  applied to $B$ to reduce the number of non-zeros in its factors.}
\label{fig:pattern}
\end{figure}

In  the  experiment,   we  fixed  $m=30$  and   $n_{nev}=50$  in  both
methods. Fig.~\ref{fig:tfdb}  shows that  the quality of  the computed
DOS by  the KPM  method is clearly  not as good as the  one obtained  from the
Lanczos method. The error for the  KPM is $0.2734$ while the error for
the Lanczos method  is only $0.0058$. This is because  the spectrum of
$(A,B)$  has  four  heavy  clusters,  which  causes  difficulties  for
polynomial-based methods to capture the corresponding peaks on the DOS
curve.

\begin{figure}[htb] 
\includegraphics[width=0.46\textwidth]{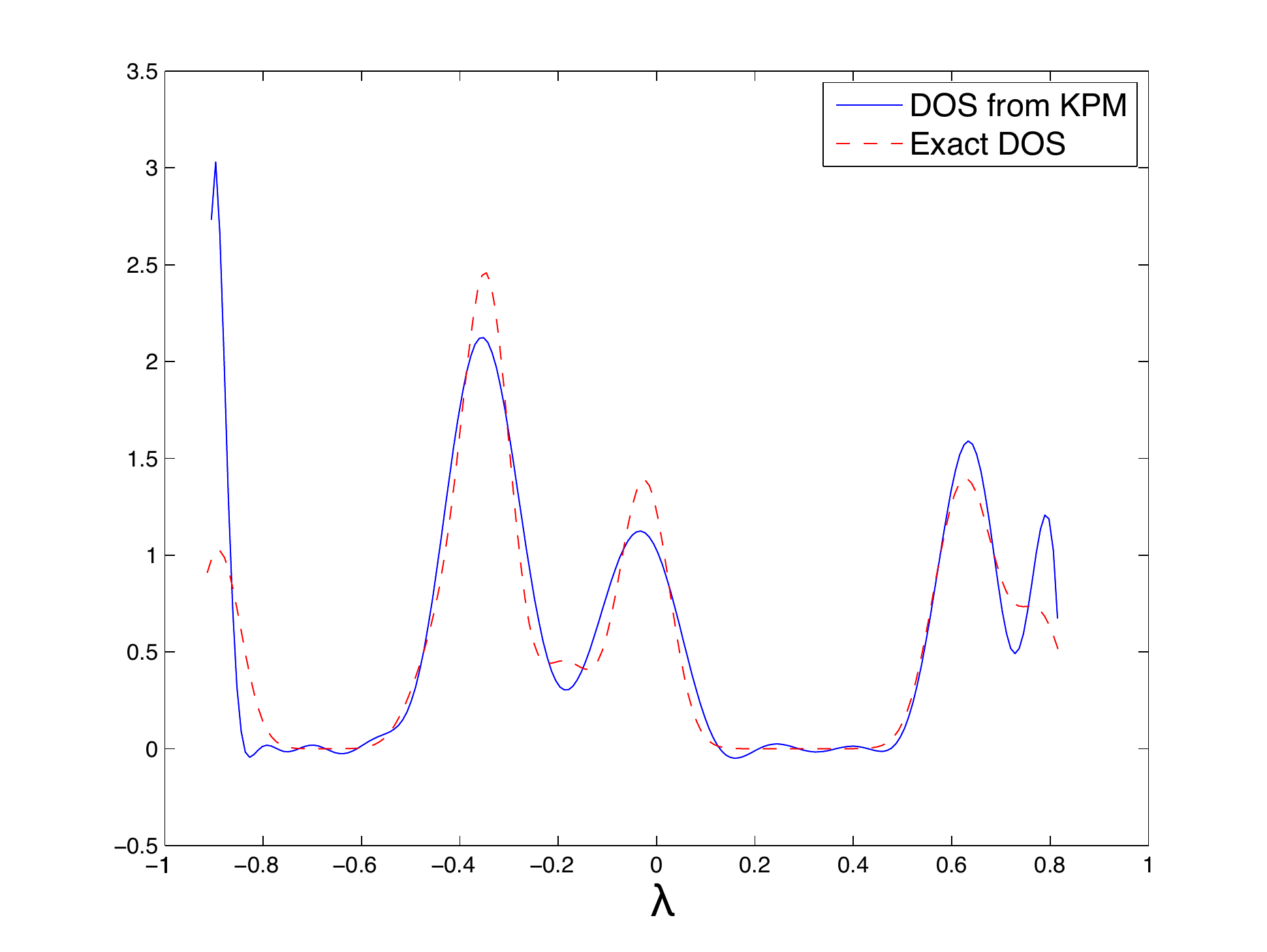}
\includegraphics[width=0.46\textwidth]{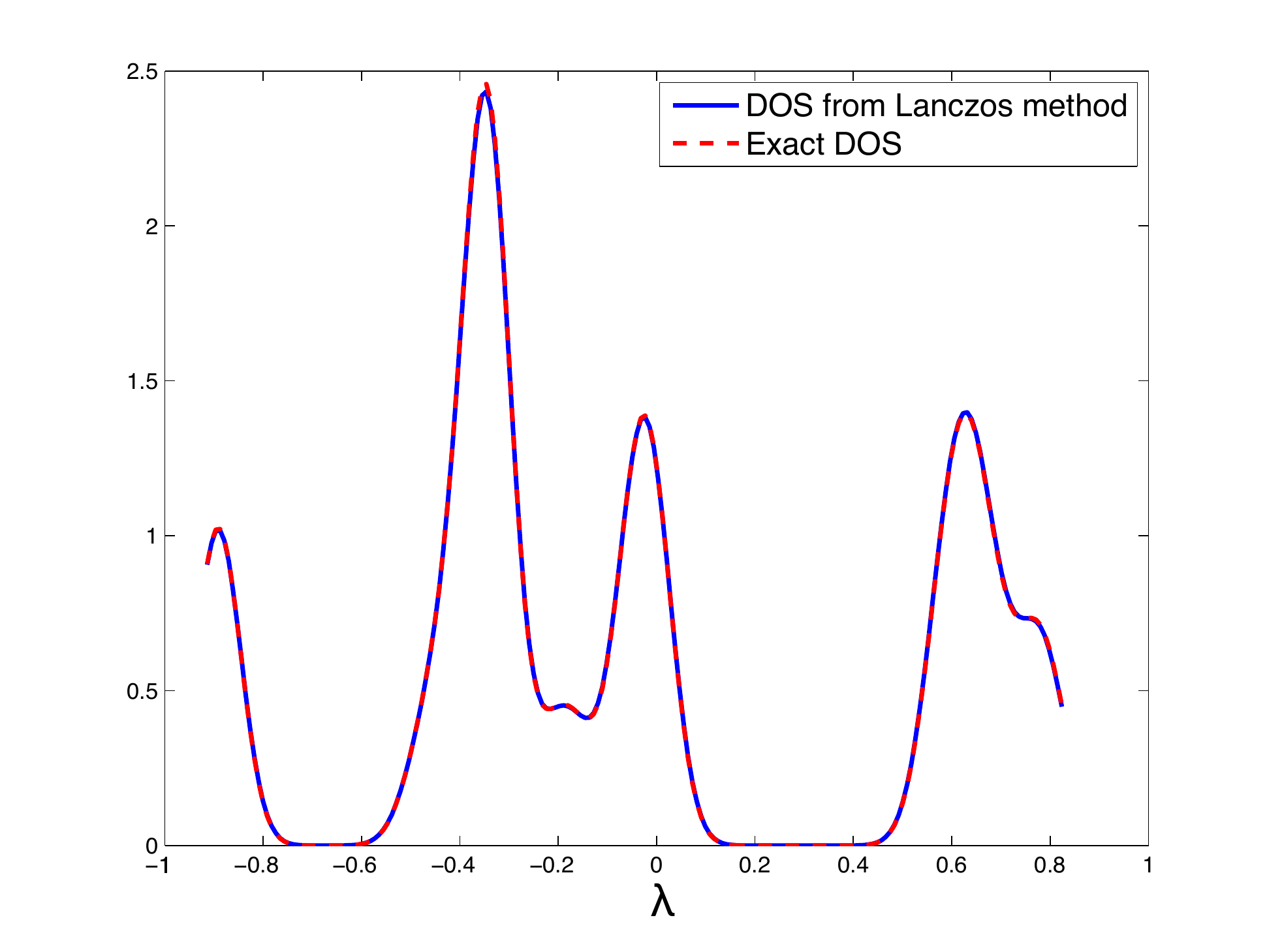}\\
\centerline{
\includegraphics[width=0.46\textwidth]{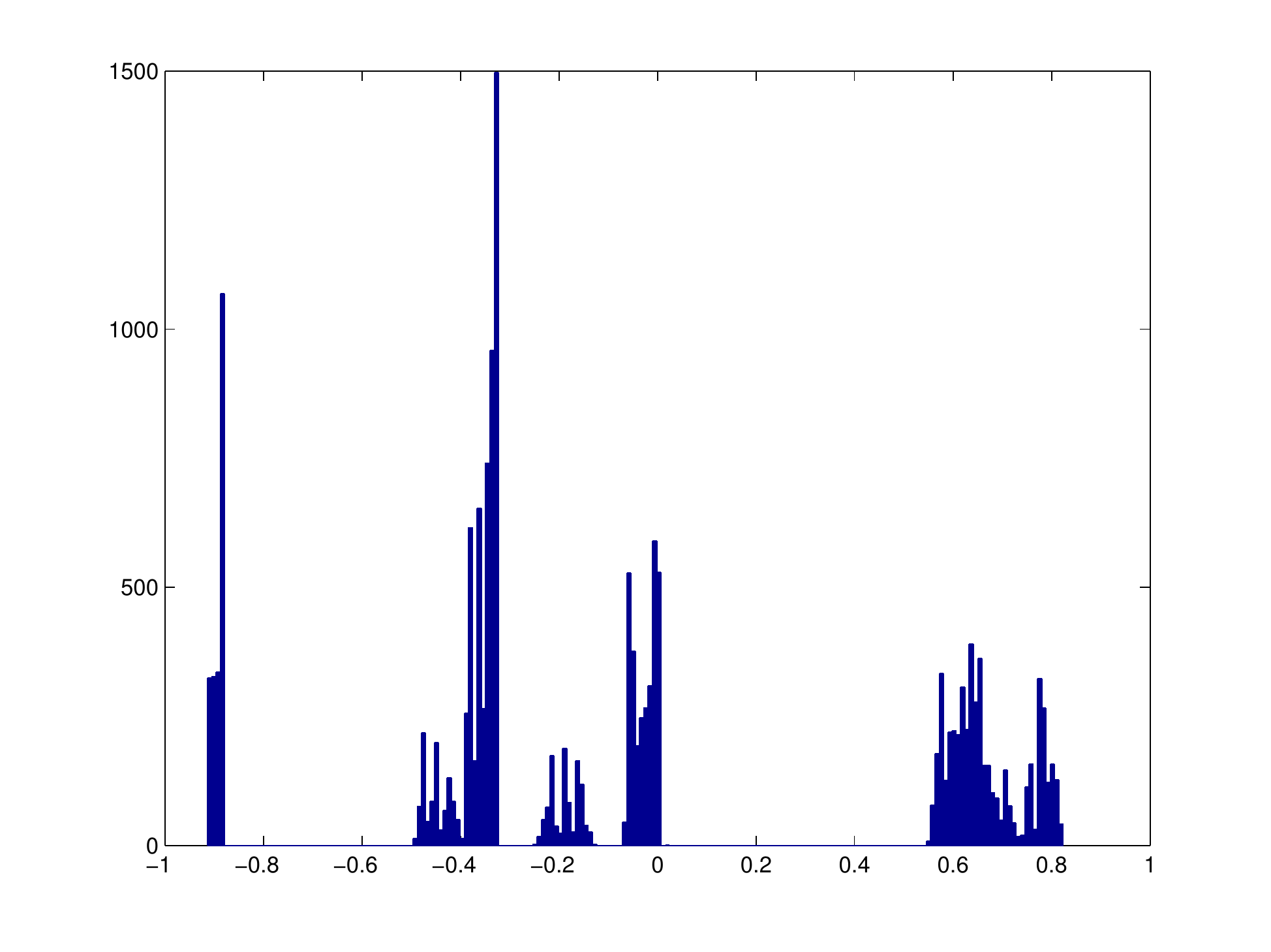}}
\caption{For the TFDB matrix pencil, the computed DOS by the KPM (upper left) 
and Lanczos method (upper right) when $m=30$ and $n_{nev}=50$, compared to the exact DOS and the histogram of the eigenvalues with $200$ bins (lower middle).
Chebyshev polynomial approximations are used for operations involving $B$.}
\label{fig:tfdb}
\end{figure} 

\subsection{Application: Slicing the spectrum}\label{sec:slicing}
 This  section discusses the  spectrum slicing
techniques implemented in the EVSL  package \cite{EVSL}. First,
the Lanczos method is invoked to get an approximate DOS $\tilde{\phi}$ of
the input  matrix pencil $(A,B)$: \eq{eq:LanDosest}  \tilde{\phi}(t) =
\frac{1}{s}    \sum_{k=1}^s   \sum_{i=1}^m    a_i\up{k}   g_{\sigma}(t
-{\theta_i\up{k}})    \    \    \text{with}\   \    g_{\sigma}(t)    =
\frac{1}{\sqrt{2\pi}\sigma}e^{\frac{-t^2}{2\sigma^2}}.    \en  Suppose
the users would  like to compute all the eigenvalues  located inside a
target interval  $[a,\ b]$  as well  as their  associated eigenvectors
with  $n_s$  slices. The  interval  $[a,\  b]$  will first  be  finely
discretized      with       $N+1$      evenly       spaced      points
$x_0=a\!<\!x_1\!<\!\ldots\!<\!x_{N-1}\!<\!x_{N}=b$,  followed  by  the
evaluation  of $\tilde{\phi}_i  :=  \tilde{\phi}(x_i)$  at each  point
$x_{i}$.

Then a numerical integration scheme is used to approximate the following integral based on the computed $\{\tilde{\phi}_i\}$
\[
y_i \approx \int_{a}^{x_{i}} \tilde{\phi}(t)dt.
\]
Each $y_i$ serves an approximation to the number of eigenvalues falling inside $[a,\ x_{i}]$. In particular, we know there are roughly $y_{N}$ eigenvalues inside $[a,\ b]$ and should expect an ideal partitioning yielding $y_{N}/n_s$ eigenvalues per slice.

The endpoints $\{t_i\}$ are identified as a subset of $x_{i}$. Start with $t_{0}=x_{0}$. The next $t_{i+1}$ for $i=0,\ldots, K-2$ is found by testing a sequence of $x_{j}$ starting with $t_{i} = x_{k}$ until  $y_{j} - y_k$ is approximately equal to $y_{K}/n_s$, yielding the point $t_{i+1} = x_{j}$. In the end, the points $\{t_{i}\}$ separate $[a,\ b]$ into $n_s$ slices.

We illustrate the  efficiency  of   this  slicing
mechanism with one example. The  test problem  is to  partition the  interval $[0.003,\
0.01]$  into   $5$  slices   for  the   earth's  normal   mode  matrix
pencil. Based  on Fig.~\ref{fig:lanvskpm2},  we know  that eigenvalues
are distributed unevenly within this interval. Therefore, a naive uniform
partitioning in which all sub-intervals have the same width   will cause  some
slices to contain many more  eigenvalues than others.   We fixed  $m$ at
$30$ and varied the number of sample vectors $n_{nev}$ to estimate the
DOS for  this matrix  pencil. The  resulting partitioning  results are
tabulated  in Table  \ref{tab:slicing}. As  we can  see, even  a small
number $n_{nev}$ can  still provide a reasonable  partitioning for the
purpose of balancing the memory usage associated with each slice.

\begin{table}[htb] 
\begin{center} 
\begin{tabular}{c|c|c|c|c|c|c} 
\multicolumn{1}{c|}{}&\multicolumn{2}{|c|}{$n_{nev}=10$}&\multicolumn{2}{|c|}{$n_{nev}=20$}&\multicolumn{2}{|c}{$n_{nev}=30$}\\\hline
$i$ & $[t_i,\ t_{i+1}] $ & $n_i$& $[t_i, \ t_{i+1}] $ & $n_i$& $[t_i, \ t_{i+1}] $ & $n_i$\\ \hline 
1 &  $[0.0030,\   0.0036] $ &  $84$&  $[0.0030,\   0.0036] $ &  $84$&$[0.0030,\   0.0036] $ &  $84$\\
2 &  $[0.0036,\   0.0045]$ &  $90$&$[0.0036,\   0.0045]$ &  $90$&  $[0.0036,\   0.0045]$ &  $90$\\
3 &  $[0.0045, \  0.0059]$ &  $105$&  $[0.0045, \  0.0059]$ &  $105$&  $[0.0045, \  0.0060]$ &  $113$\\
4 &  $[0.0059,\    0.0077]$ &  $113$ &  $[0.0059,\    0.0078]$ &  $119$ &  $[0.0060,\    0.0079]$ &  $115$\\
5 &  $[0.0077,\ 0.0100]$ &  $110$ &  $[0.0078,\ 0.0100]$ &  $104$&$[0.0079,\ 0.0100]$ &  $98$\\
\hline
\end{tabular}
\end{center} 
\caption{Partitioning $[0.003,\ 0.010]$ into $5$ slices $[t_i,\ t_{i+1}]$ for the earth's normal mode matrix pencil. The computational times for the Lanczos method are $0.53$s, $0.96$s and $1.58$s as the number of sample vectors $n_{nev}$ increases from $10$ to $30$. $n_i$ is the exact number of eigenvalues located inside the $i$th partitioned slice $[t_i,\ t_{i+1}]$.}\label{tab:slicing}
\end{table}

\section{Conclusion}  \label{sec:conclude}  
Algorithms that require only  matrix-vector
multiplications can offer enormous advantages over those that rely on 
factorizations. This has been observed for polynomial filtering techniques
for eigenvalue problems \cite{Filtlan-paper,spectrumslicing},
and it has also  just been illustrated in this paper which described two methods
to   estimate spectral densities  of  matrix pencils. These two
methods use  Chebyshev  polynomial  approximation techniques to
approximate  the  operations involving  $B$  and so they  only operate on  $(A,B)$
through matrix-vector multiplications. 

The  bounds that were established suggest  that the
Lanczos method may converge twice as fast  as the KPM method  under some
assumptions and it was confirmed experimentally to  produce more accurate estimation when the spectrum
contains clusters.
The proposed  methods are being implemented in C  in the EVSL package
\cite{EVSL} 
and will be   made available  in the  next release.

This study suggested that it is also possible to compute eigenvalues and vectors
of matrix pairs without any factorization. Theorem~\ref{thm:perturb} indicates  that 
 rough approximations of the eigenpairs can be  obtained 
by using a low-degree polynomial for $B^{-1/2}$. 
These approximations can be improved in a number of ways, e.g., by a Rayleigh-Ritz, or
a subspace iteration-type procedure. We plan on exploring this approach in our future work.

\bibliographystyle{siam}
\bibliography{dos.bib}

\begin{thebibliography}{10}

\bibitem{AmestoyDavisDuff04}
{\sc P.~R. Amestoy, T.~A. Davis, and I.~S. Duff}, {\em Algorithm 837: An
  approximate minimum degree ordering algorithm}, ACM Trans. Math. Software, 30
  (2004), pp.~381--388.

\bibitem{ChowEtAl}
{\sc T.~Ando, E.~Chow, Y.~Saad, and J.~Skolnick}, {\em Krylov subspace methods
  for computing hydrodynamic interactions in brownian dynamics simulations}, J.
  Chem. Phys., 137 (2012), p.~064106.

\bibitem{graph16}
{\sc E.~G. Boman, K.~Deweese, and J.~R. Gilbert}, {\em An empirical comparison
  of graph laplacian solvers}, 2016 Proceedings of the Eighteenth Workshop on
  Algorithm Engineering and Experiments (ALENEX), pp.~174--188.

\bibitem{smash}
{\sc D.~Cai, E.~Chow, Y.~Xi, and Y.~Saad}, {\em {SMASH}: {S}tructured matrix
  approximation by separation and hierarchy.}, Preprint ys-2016-10, Dept.
  Computer Science and Engineering, University of Minnesota, Minneapolis, MN,
  (2016).

\bibitem{ChAnSa09}
{\sc J.~Chen, M.~Anitescu, and Y.~Saad}, {\em Computing f(a)b via least squares
  polynomial approximations}, 33 (2011), pp.~195--222.

\bibitem{2015-siam-ns}
{\sc K.~Dong and D.~Bindel}, {\em Modified kernel polynomial method for
  estimating graph spectra}, in SIAM Network Science 2015 (poster), May 2015.

\bibitem{DraboldSankey1993}
{\sc D.~A. Drabold and O.~F. Sankey}, {\em Maximum entropy approach for linear
  scaling in the electronic structure problem}, Phys. Rev. Lett., 70 (1993),
  pp.~3631--3634.

\bibitem{kpmsurvey2006}
{\sc A.~Wei\ss e, G.~Wellein, A.~Alvermann, and H.~Fehske}, {\em The kernel
  polynomial method}, Rev. Mod. Phys., 78 (2006), pp.~275--306.

\bibitem{EVSL}
{\em Eigenvalues slicing library}.
\newblock http://www.cs.umn.edu/$\sim$saad/software/EVSL/.

\bibitem{Filtlan-paper}
{\sc H.~R. Fang and Y.~Saad}, {\em A filtered {Lanczos} procedure for extreme
  and interior eigenvalue problems}, SIAM J. Scient. Comput., 34 (2012),
  pp.~A2220--A2246.

\bibitem{Golub94matrices}
{\sc G.~H. Golub and G.~Meurant}, {\em Matrices, moments and quadrature}, in IN
  NUMERICAL ANALYSIS, 1994, pp.~105--156.

\bibitem{golubwel}
{\sc G.~H. Golub and J.~H. Welsch}, {\em Calculation of {G}auss quadrature
  rule}, Math. Comp., 23 (1969), pp.~221--230.

\bibitem{GREENGARD1997280}
{\sc L.~Greengard and V.~Rokhlin}, {\em A fast algorithm for particle
  simulations}, J. Comput. Phys., 73 (1987), pp.~325--348.

\bibitem{Higham-book}
{\sc N.~Higham}, {\em Functions of Matrices}, Society for Industrial and
  Applied Mathematics, 2008.

\bibitem{Hutchinson1989}
{\sc M.~F. Hutchinson}, {\em A stochastic estimator of the trace of the
  influence matrix for {Laplacian} smoothing splines}, Commun. Stat. Simul.
  Comput., 18 (1989), pp.~1059--1076.

\bibitem{DDSpec}
{\sc V.~Kalantzis, R.~Li, and Y.~Saad}, {\em Spectral schur complement
  techniques for symmetric eigenvalue problems}, Electron. Trans. Numer. Anal.,
  45 (2016), pp.~305--329.

\bibitem{mass}
{\sc L.~Kamenski, W.~Huang, and H.~Xu}, {\em Conditioning of finite element
  equations with arbitrary anisotropic meshes}, Math. Comput., 83 (2014),
  pp.~2187--2211.

\bibitem{spectrumslicing}
{\sc R.~Li, Y.~Xi, E.~Vecharynski, C.~Yang, and Y.~Saad}, {\em A
  {Thick-Restart} {L}anczos algorithm with polynomial filtering for {H}ermitian
  eigenvalue problems}, SIAM J. Sci. Comput., 38 (2016), pp.~A2512--A2534.

\bibitem{Lin2017}
{\sc L.~Lin}, {\em Randomized estimation of spectral densities of large
  matrices made accurate}, Numer. Math., 136 (2017), pp.~183--213.

\bibitem{DOSpaper16}
{\sc L.~Lin, Y.~Saad, and C.~Yang}, {\em Approximating spectral densities of
  large matrices}, SIAM Review, 58 (2016), pp.~34--65.

\bibitem{NAKATSUKASA2010242}
{\sc Y.~Nakatsukasa}, {\em Absolute and relative weyl theorems for generalized
  eigenvalue problems}, Linear Algebra Appl., 432 (2010), pp.~242 -- 248.

\bibitem{ParkerZhuHuangEtAl1996}
{\sc G.~A. Parker, W.~Zhu, Y.~Huang, D.K. Hoffman, and D.~J. Kouri}, {\em
  Matrix pseudo-spectroscopy: iterative calculation of matrix eigenvalues and
  eigenvectors of large matrices using a polynomial expansion of the {Dirac}
  delta function}, Comput. Phys. Commun., 96 (1996), pp.~27--35.

\bibitem{Saad2011}
{\sc Y.~Saad}, {\em Numerical Methods for Large Eigenvalue Problems}, SIAM,
  Philadelpha, 2011.

\bibitem{Schofield2012497}
{\sc G.~Schofield, J.~R. Chelikowsky, and Y.~Saad}, {\em A spectrum slicing
  method for the kohn–sham problem}, Comput. Phys. Commun., 183 (2012),
  pp.~497 -- 505.

\bibitem{shigmigreport2016note}
{\sc J.~Shi and M.~V. de~Hoop}, {\em A note on the parallel computation of
  earth's normal modes via structured factorization}, Preceedings of the
  Project Review, Geo-Mathematical Imaging Group,  (2016), pp.~223--236.

\bibitem{normalmodes}
{\sc J.~Shi, M.~V. de~Hoop, R.~Li, Y.~Xi, and Y.~Saad.}, {\em Fast eigensolver
  for computing earth's normal modes}, in Proceedings of the Project Review,
  Geo-Mathematical Imaging Group, vol.~2, 2017, pp.~317--345.

\bibitem{SilverRoder1994}
{\sc R.~N. Silver and H.~R{\"o}der}, {\em Densities of states of
  mega-dimensional {Hamiltonian} matrices}, Int. J. Mod. Phys. C, 5 (1994),
  pp.~735--753.

\bibitem{SilverRoder1997}
\leavevmode\vrule height 2pt depth -1.6pt width 23pt, {\em Calculation of
  densities of states and spectral functions by {Chebyshev} recursion and
  maximum entropy}, Phys. Rev. E, 56 (1997), p.~4822.

\bibitem{SilverRoederVoterEtAl1996}
{\sc R.~N. Silver, H.~R{\"o}der, A.~F. Voter, and J.~D. Kress}, {\em Kernel
  polynomial approximations for densities of states and spectral functions}, J.
  Comput. Phys., 124 (1996), pp.~115--130.

\bibitem{TangSaad2012}
{\sc J.~M. Tang and Y.~Saad}, {\em A probing method for computing the diagonal
  of a matrix inverse}, Numer. Lin. Alg. Appl., 19 (2012), pp.~485--501.

\bibitem{Trefethen-book}
{\sc L.~N. Trefethen}, {\em Approximation Theory and Approximation Practice
  (Other Titles in Applied Mathematics)}, Society for Industrial and Applied
  Mathematics, Philadelphia, PA, USA, 2012.

\bibitem{UbaruYSrank16}
{\sc S.~Ubaru and Y.~Saad}, {\em Fast methods for estimating the numerical rank
  of large matrices}, in Proceedings of the 33rd International Conference on
  International Conference on Machine Learning - Volume 48, ICML'16, JMLR.org,
  2016, pp.~468--477.

\bibitem{UbaruYSrankDL}
{\sc S.~Ubaru, A.-K. Seghouane, and Y.~Saad}, {\em Improving the incoherence of
  a learned dictionary via rank shrinkage}, Neural Computation,  (2017),
  pp.~263--285.

\bibitem{Wang1994}
{\sc L.-W. Wang}, {\em Calculating the density of states and optical-absorption
  spectra of large quantum systems by the plane-wave moments method}, Phys.
  Rev. B, 49 (1994), p.~10154.

\bibitem{Wathen1987}
{\sc A.~Wathen}, {\em Realistic eigenvalue bounds for the {Galerkin} mass
  matrix}, IMA J. Numer. Anal., 7 (1987), pp.~449--457.

\bibitem{Wathen2008}
{\sc A.~Wathen and T.~Rees}, {\em Chebyshev semi-iteration in preconditioning
  for problems including the mass matrix.}, Electron. Trans. Numer. Anal., 34
  (2008), pp.~125--135.

\bibitem{lsfeast}
{\sc Y.~Xi and Y.~Saad}, {\em Computing partial spectra with least-squares
  rational filters}, SIAM J. Sci. Comput., 38 (2016), pp.~A3020--A3045.

\bibitem{ZhouSaadTiagoEtAl2006}
{\sc Y.~Zhou, Y.~Saad, M.~L. Tiago, and J.~R. Chelikowsky}, {\em Parallel
  self-consistent-field calculations via {Chebyshev}-filtered subspace
  acceleration}, Phy. Rev. E, 74 (2006), p.~066704.

\end{thebibliography}

\end{document}